\newcommand{\defeq}{\vcentcolon=}
\theoremstyle{definition}
\newtheorem{thm}{Theorem}[section] 
\newtheorem*{thm*}{Theorem}
\newtheorem{defn}[thm]{Definition} 
\newtheorem{exmp}[thm]{Example}
\newtheorem{cor}[thm]{Corollary}
\theoremstyle{remark}
\newtheorem*{rem}{Remark:} 
\newcommand{\smf}{submanifold } 
\newcommand{\R}{\mathbb{R}}
\newcommand{\map}{S^{n+k} \rightarrow S^n}
\newcommand{\squeezeup}{\vspace{-3mm}}
\title{Transversality and framed cobordism}
\author{Sturmius Tuschmann}
\date{}
\begin{document}

\pagenumbering{gobble}

 \normalsize
\maketitle

\pagenumbering{arabic}
\begin{abstract}
\noindent René Thom's remarkable and far-reaching concept of transversality has found numerous powerful applications. Most importantly, it allowed Thom to develop cobordism theory, which led to a piercing insight into the topology of smooth manifolds, when he generalized Lev Pontryagin's earlier concept of framed cobordism. For his profound findings, in 1958 he was awarded the Fields Medal. \\ The present paper provides a down-to-earth approach to transversality theory which only assumes basic knowledge about smooth manifolds. Following an idea of Andrew Putman, we then use transversality to give a novel and detailed proof of an isomorphism theorem of Pontryagin about homotopy groups of spheres whose original proof is generally believed to be rather hard to access. %
\end{abstract}
\section*{Introduction}
Cobordism theory was established by Lev Pontryagin and above all by René Thom in the 1950s, but its roots already date back to the late 19th century. Namely, according to Jean Dieudonné \cite{Di89}, in 1895, during his attempts to define homology, Henri Poincaré consi\-dered systems of connected, closed 
$k$-dimensional smooth submanifolds $M_1,\dots,M_\lambda$ of a higher-dimensional smooth manifold $N$, whose union forms the boundary $\partial X$ of a connected, compact ($k$+1)-dimensional smooth submanifold $X$ of $N$. He expressed this relation by writing $M_1+\dots+M_\lambda \sim 0$ and called it a "homology" between the $M_j$. But when he claimed that one could "add" homologies, he ran into difficulties because he did not pay enough attention to the following question: Given two smooth submanifolds $X$ and $X'$ of $N$, how can one find a smooth submanifold $Y$ of $N$ with $\partial Y =\partial X\cup\partial X'$? This is indeed a nontrivial problem because $X\cup X'$ is not a smooth manifold in general, as $X\cap X'$ may not be empty.\\[1ex]
Those complications can be avoided if one considers the "sum" of smooth manifolds to be their disjoint union, like Pontryagin and Thom did. Their general idea was to consider smooth manifolds up to an equivalence relation called cobordism, i.e. two closed smooth $k$-manifolds $M$ and $M'$ are called cobordant, if their disjoint union $M\sqcup M'$ is equal to the boundary $\partial X$ of a compact smooth ($k$+1)-manifold $X$. \\[1ex]
The development of this theory started in 1950, when Pontryagin initially regarded a special case of cobordism, called framed cobordism, between closed smooth $k$-submanifolds of $\R^{n+k}$ with smoothly trivialized normal bundle, called framed manifolds. His primary motivation to do so was to investigate continuous maps between spheres. He showed that framed cobordism is an equivalence relation in every dimension and that the disjoint union operation turns the corresponding sets of equivalence classes into abelian groups $\Omega_k^{fr}(\R^{n+k})$. He was able to prove that these abelian groups are isomorphic to certain homotopy groups of spheres that he could determine thereby. \\[1ex]
In 1953, Thom broadly generalized and extended Pontryagin's work and hence can be seen as the main developer of cobordism theory. He came up with the great idea of transversality, which generalizes the concept of regu\-lar values of smooth maps between smooth manifolds. He also associated to an $n$-dimensional real vector bundle $\xi$ the quotient space of its disk bundle by its sphere bundle, called the Thom space $Th(\xi)$ of $\xi$. Then, using transversality, he discovered a correspondence between smooth submanifolds of a smooth manifold $N$ and continuous maps $N\to Th(\xi)$. From this, one can derive a bijection (even an isomorphism) between the $k$-th cobordism group $\mathcal{N}_k$ - obtained by equipping the set of cobordism classes of closed smooth $k$-manifolds with an addition induced by the disjoint union - and higher-dimensional homotopy groups of the Thom space of a special fiber bundle called universal bundle. By using the above isomorphism, Thom was able to determine the cobordism groups $\mathcal{N}_k$, i.e. classify closed smooth manifolds up to cobordism in his famous paper from 1954 \cite{Th54}. He was awarded the Field's medal particularly for this result four years later. \\[1ex] 
Thom's key idea was the simplification of a geometric cobordism problem to a homotopy problem. In contrast to Pontryagin, who solved a cobordism problem in order to compute certain homotopy groups, Thom made use of his one-to-one correspondence the other way around.\\[1ex]
The first two parts of this paper deal with Pontryagin's work on framed cobordism and Thom's concept of transversality, so they are arranged in the chronological order discussed above. However, instead of following up with Thom's work on cobordism, the last part of this paper focuses on the proof of Pontryagin's original theorem by using transversality methods as a special example for their application. This means we prove a result from 1950 by using the more "modern" tool of transversality established a few years later. This allows us to simplify certain arguments of the proof. 
\\[1ex]
\noindent I would like to thank Michael Joachim for his constant support and feedback he gave me during the writing of this paper which grew out of my thesis under his supervision.

\section{Pontryagin's work on framed cobordism}
After there had been a rapid development of algebraic topology in the 1940s, Pontryagin came up with a new approach to the homotopy classification of maps from the ($n$+$k$)-sphere to the $n$-sphere for some $k\geq 0$ in 1950. He regarded closed smooth $k$-manifolds embedded into $\R^{n+k}$ with smoothly trivialized normal bundle, so-called framed submanifolds of $\R^{n+k}$, and introduced an equivalence relation called framed cobordism between them. He showed that two smooth maps $f,g:\map$ are homotopic if and only if their pre\-images $f^{-1}(y)$, $g^{-1}(y)$ for a regular value $y\in S^n$ belong to the same framed cobordism class. Thereby, he simplified the problem of classifying maps $f:\map$ up to homotopy to the problem of classifying framed mani\-folds up to framed cobordism. By this, he was able to compute $\pi_{n+1}(S^n)$ and $\pi_{n+2}(S^n)$, followed by his student Vladimir Rokhlin, who later computed $\pi_{n+3}(S^n)$. \\[1ex]
If not stated otherwise, let $n\geq 1$ and $k\geq 0$.
\begin{defn}\label{normalbundle}
Let $M$ be a smooth $k$-\smf of  $\mathbb{R}^{n+k}$, where $\mathbb{R}^{n+k}$ is equipped with the standard scalar product. The normal space $N_xM\vcentcolon= T_xM^\perp \subset T_x\R^{n+k}$ of $M$ at $x$ is the space of normal vectors to $M$ in $\R^{n+k}$ at $x$. The normal bundle $NM$ of M in $\mathbb{R}^{n+k}$ is the smooth vector bundle over $M$ consisting of all pairs $(x,v)$ with $x \in M$ and $ v \in N_xM$. 
\end{defn}
\noindent The following theorem is a very useful tool in differential topology when working with normal bundles. A proof can be found in \cite{Le12}, Ch. 6.
\begin{thm}\label{tubularneighborhood} \textbf{(Tubular Neighborhood Theorem)}
Let $M$ be a smooth $k$-\smf of  $\mathbb{R}^{n+k}$. Then there is an open neighborhood of the zero section $(NM)_0\defeq\{(x,0)\in NM | x \in M\}$ in $NM$ which is mapped through the map \[\gamma:NM\to\R^{n+k}\text{, }(x,v)\mapsto x+v\] diffeomorphically onto an open neighborhood $U\subset\R^{n+k}$ of $M$. $U$ is called a tubular neighborhood of $M$.
\end{thm}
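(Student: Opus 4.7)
The plan is to proceed in two stages: first I would show that $\gamma$ is a local diffeomorphism on some open neighborhood of the zero section $(NM)_0$ via the inverse function theorem, and then shrink this neighborhood so that $\gamma$ also becomes injective there.

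For the first step, I would compute the differential $d\gamma_{(x,0)}$ at an arbitrary point $(x,0) \in (NM)_0$. Along the zero section, the tangent space $T_{(x,0)}NM$ decomposes canonically as $T_xM \oplus N_xM$, where the first summand captures the horizontal directions along $M$ and the second captures the vertical fiber directions. Under this identification, a direct computation shows that $d\gamma_{(x,0)}(w,v) = w+v$ inside $T_x\R^{n+k}$. Since $T_x\R^{n+k} = T_xM \oplus N_xM$ by the very definition of the normal bundle, $d\gamma_{(x,0)}$ is a linear isomorphism. The inverse function theorem then yields, for every $x \in M$, an open neighborhood of $(x,0)$ in $NM$ on which $\gamma$ restricts to a diffeomorphism onto an open subset of $\R^{n+k}$; the union of all such neighborhoods forms an open set $V \supset (NM)_0$ on which $\gamma$ is a local diffeomorphism.

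The main obstacle will be the second step, namely upgrading this local result to global injectivity. For each $x \in M$, I would use continuity together with a relatively compact chart $W_x \ni x$ in $M$ to find a radius $\varepsilon_x > 0$ such that $\gamma$ is injective on the tube $\{(y,v) \in NM : y \in W_x,\, \|v\| < \varepsilon_x\}$. Exploiting paracompactness of $M$, I would then patch these local radii together into a continuous positive function $\delta \colon M \to \R_{>0}$, producing a global tube $V_\delta \defeq \{(x,v) \in NM : \|v\| < \delta(x)\}$ contained in $V$. Injectivity on $V_\delta$ would be verified by contradiction: assuming $\gamma(x,v) = \gamma(y,w)$ for two distinct points of $V_\delta$ forces $\|x-y\| = \|v-w\| < \delta(x)+\delta(y)$, so by choosing $\delta$ sufficiently small this would put $x$ and $y$ into a common local tube where $\gamma$ is already known to be injective, contradicting the assumption. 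Once global injectivity is secured, $\gamma|_{V_\delta}$ is a bijective local diffeomorphism and hence a diffeomorphism onto the open set $U \defeq \gamma(V_\delta)$, which contains $M$ by construction.
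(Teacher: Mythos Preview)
The paper does not actually prove the Tubular Neighborhood Theorem; it simply refers the reader to Lee's textbook. Your outline follows essentially the standard argument found there: establish that $\gamma$ is a local diffeomorphism along the zero section via the inverse function theorem, then shrink to a tube $V_\delta$ on which $\gamma$ is globally injective. The plan is sound. The only place that needs more care is the final step, where you must construct $\delta$ so that the inequality $\|x-y\| < \delta(x)+\delta(y)$ genuinely forces $x$ and $y$ into a common chart $W_i$ with $\delta < \varepsilon_i$ there; this is routine but does not fall out of a naive partition-of-unity combination of the $\varepsilon_x$ alone, and one has to arrange in addition that the $2\delta(x)$-ball around each $x$ meets $M$ only inside a single $W_i$.

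It is worth comparing with the one related result the paper \emph{does} prove in full, namely the product-neighborhood theorem for framed (hence compact) submanifolds (Theorem~\ref{prod}). There the global injectivity step is handled quite differently: assuming the map fails to be injective on every $\varepsilon$-tube, one extracts by compactness of $M$ convergent sequences of colliding pairs whose common limit lies on the zero section, contradicting local injectivity from the inverse function theorem. That sequential/compactness argument is shorter but genuinely requires $M$ to be compact, which is why your paracompactness route is the appropriate one for the general statement here.
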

\begin{figure}[h]
\begin{center}
\squeezeup
\includegraphics[width = 250pt]{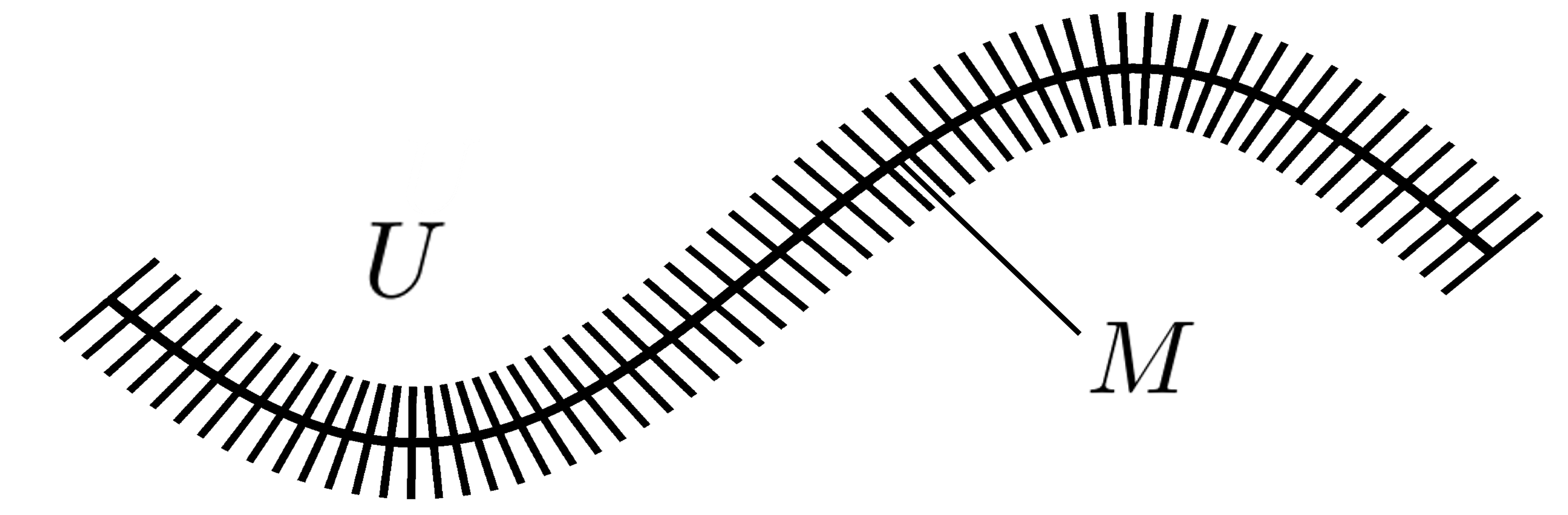}
\caption{A tubular neighborhood $U$ of $M$}
\end{center}
\squeezeup
\end{figure}
\begin{defn}
A framing of a smooth $k$-submanifold $M$ of $\mathbb{R}^{n+k}$ is a smooth map $\nu : M \rightarrow (NM)^n$ which assigns to each $x \in M$ a basis \[\nu(x) = (\nu_1(x), \dots, \nu_n(x))\] for $N_xM\subset T_x\R^{n+k}\cong \R^{n+k}$, and the $\nu_i(x)$ can be viewed as vectors in $\R^{n+k}$. If $M$ is closed, $(M, \nu)$ is called a framed submanifold of  $\mathbb{R}^{n+k}$.
\end{defn}
\begin{figure}[h]
\begin{center}

\includegraphics[width = 300pt]{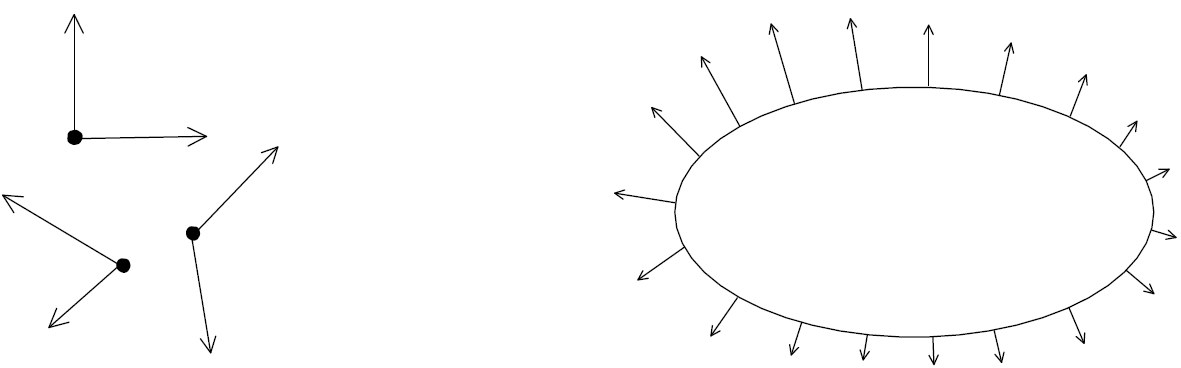}
\caption{\label{framedsmfs} A 0-dimensional and a 1-dimensional framed submanifold of $\R^2$}
\end{center}
\squeezeup
\end{figure}
\begin{rem}
Equivalently, a framing of $M$ is a smooth isomorphism of smooth vector bundles $M\times\mathbb{R}^n \cong NM$, which means that $M$ has a smoothly tri\-via\-lized normal bundle. Not every smooth submanifold of $ \mathbb{R}^{n+k}$ is frameable, for instance, no nonorientable smooth manifold can be given a framing.
\end{rem}
\begin{defn}\label{fc}
A framed cobordism between two $k$-dimensio\-nal framed submanifolds $(M, \nu)$ and $(M', \nu')$ of  $\mathbb{R}^{n+k}$ is a compact smooth ($k$+1)-submanifold $X$ of $\mathbb{R}^{n+k}\times[0,1]\subset\mathbb{R}^{n+k+1}$ such that \begin{itemize}
\item $(M\times[0,\varepsilon))\cup (M'\times(1-\varepsilon,1])\subset X$ for an $\varepsilon>0$
\item $\partial X = X\cap (\mathbb{R}^{n+k}\times\{0,1\}) = (M\times\{0\})\cup(M'\times\{1\})$
\end{itemize}
together with a framing $\omega$ on $X$ that restricts to the given framings $\nu$, $\nu'$ in the following way:
\[\squeezeup\omega_i(x,t) = (\nu_i(x),0) \text{ for } (x,t) \in M\times[0,\varepsilon) \text{ and}\] 
\[\omega_i(x,t) = (\nu'_i(x),0) \text{ for } (x,t) \in M'\times(1-\varepsilon,1]\] If such a framed cobordism exists, we call $M$ and $M'$ framed cobordant.
\end{defn}
\begin{figure}[h]
\begin{center}
\includegraphics[width = 170pt]{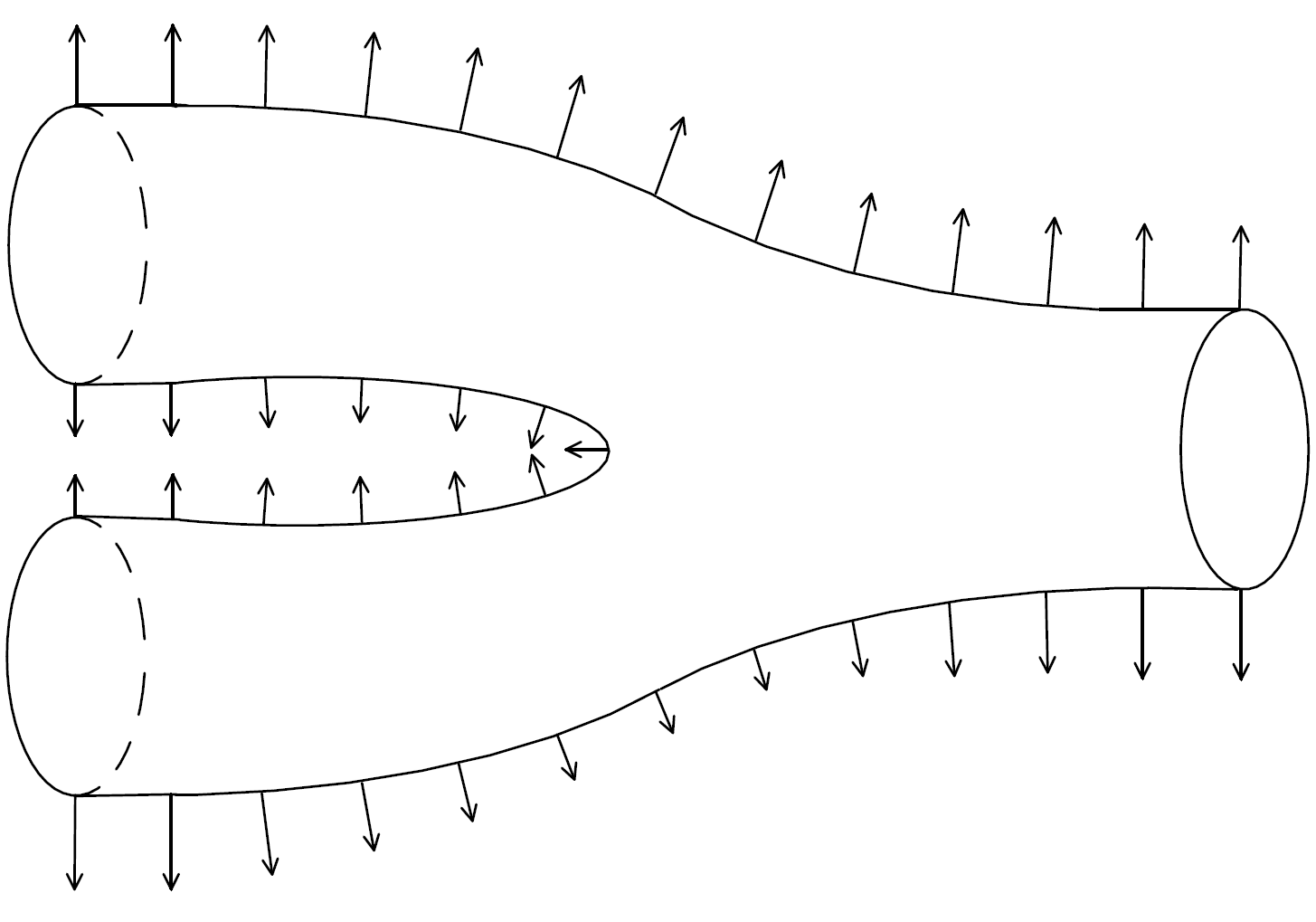}
\caption{\label{framedbordism} Framed cobordism between two 1-dimensional framed submanifolds of $\R^2$}
\end{center}
\squeezeup
\end{figure}
\begin{rem}
$T_xX$ is a ($k$+1)-dimensional and  $T_x(\mathbb{R}^{n+k}\times[0,1])$ is an ($n$+$k$+1)-dimensional vector space for boundary points $x\in \partial X$, too. Thus, $NX$ and framings of $X\subset \mathbb{R}^{n+k}\times[0,1]$ can be defined in the same way as above.
\end{rem}
\begin{rem}
The terms "bordism" and "cobordism" are synonyms. "Bordism" was introduced by Thom, meaning the study of boundaries (from French "bord"). "Cobordism" means that two manifolds jointly form the boundary of a manifold with boundary.
\end{rem}
\noindent It turns out that framed cobordism is an equivalence relation: Reflexi\-vity follows from regarding the cylinder of the underlying framed submanifold together with the constant framing, while symmetry follows from mirroring $\R^{n+k}\times[0,1]$ at the hyperplane $\R^{n+k}\times\{\frac{1}{2}\}$. Finally, transitivity follows from concatenating two framed cobordisms at their boundaries appropriately.\\[1ex] This observation allows us to consider the set $\Omega ^{fr}_k(\mathbb{R}^{n+k})$ of framed cobordism classes of $k$-dimensional framed submanifolds of $\mathbb{R}^{n+k}$. One defines an addition on $\Omega ^{fr}_k(\mathbb{R}^{n+k})$ by mapping two $k$-dimensional framed submanifolds of $\mathbb{R}^{n+k}$ to their union, which can be assumed to be disjoint, since the compactness of the manifolds allows for suitable translations in case they intersect. This operation turns $\Omega ^{fr}_k(\mathbb{R}^{n+k})$ into an abelian group.\\[1ex] Now, consider a smooth map $f:S^{n+k}\to S^n$. Then, if one ignores the tri\-vial case where $f$ is constant, by Sard's Theorem (Theorem \ref{sard}), there is a regular value $y\in S^n$ such that $f^{-1}(y)$ is a closed smooth $k$-submanifold of $S^{n+k}$ by the Preimage Theorem (Theorem \ref{preimagethm}). $f^{-1}(y)$ can be interpreted as a closed smooth $k$-submanifold of $\R^{n+k}$, which can be equipped with a framing. Therefore, one can assign to a smooth map $f:S^{n+k}\to S^n$ a framed cobordism class $[f^{-1}(y)]\in\Omega ^{fr}_k(\mathbb{R}^{n+k})$ in a canonical way. Pontryagin was able to show that two such smooth maps are homotopic if and only if their preimages of a fixed regular value belong to the same framed cobordism class. This observation resulted in the following impressive theorem:
\begin{thm}\label{mthm} There is an isomorphism $\pi_{n+k}(S^n)\cong \Omega ^{fr}_k(\mathbb{R}^{n+k})$.
\end{thm}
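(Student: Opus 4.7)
The plan is to construct mutually inverse homomorphisms $\Phi : \pi_{n+k}(S^n) \to \Omega_k^{fr}(\R^{n+k})$ and $\Psi$ in the other direction. Both constructions are essentially geometric, and the argument throughout relies on smooth approximation together with transversality to pass between continuous and smooth data and to verify independence of choices.

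For $\Phi$, given $[f]\in\pi_{n+k}(S^n)$, I would first smoothly approximate $f$, then pick a regular value $y\in S^n$ via Sard's theorem, so that by the Preimage Theorem $M\defeq f^{-1}(y)\subset S^{n+k}$ is a closed smooth $k$-submanifold. Stereographic projection from a point of $S^{n+k}\setminus M$ (which exists since $M$ is proper) embeds $M$ into $\R^{n+k}$. Fixing once and for all a basis $(e_1,\dots,e_n)$ of $T_yS^n$, the derivative $df_x$ restricts to an isomorphism $N_xM\to T_yS^n$ at every $x\in M$ because $y$ is regular and $\ker df_x = T_xM$; set $\nu_i(x)\defeq (df_x|_{N_xM})^{-1}(e_i)$ and $\Phi([f])\defeq [(M,\nu)]$.

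For $\Psi$, given $(M,\nu)\in\Omega_k^{fr}(\R^{n+k})$, the framing gives a bundle isomorphism $M\times\R^n\cong NM$ which, composed with $\gamma$ from the Tubular Neighborhood Theorem and rescaled, produces a diffeomorphism $\phi : M\times D^n \to U$ onto an open tubular neighborhood of $M$. Identifying $S^{n+k}$ with the one-point compactification of $\R^{n+k}$ and fixing a diffeomorphism $D^n\cong S^n\setminus\{p\}$, I would define a map $S^{n+k}\to S^n$ to equal the composition $U\xrightarrow{\phi^{-1}} M\times D^n \to D^n \cong S^n\setminus\{p\}$ on $U$ and the constant $p$ on the complement; continuity holds because $\phi^{-1}$ of a point near $\partial U$ lies near $\partial D^n$, which maps to points near $p$. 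Set $\Psi([M,\nu])$ to be the homotopy class of this Thom-collapse map.

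The main obstacle is well-definedness of $\Phi$, which must be verified against changes of the smooth representative within a homotopy class, of the regular value $y$, and of the basis $(e_i)$. The unified tool in all three cases is the following: given two admissible choices, interpolate via a smooth homotopy $H:S^{n+k}\times[0,1]\to S^n$, and use Sard plus a transversality argument on the boundary components to find a single value $y'$ that is regular for $H$ and for both restrictions $H|_{t=0,1}$; then $H^{-1}(y')$ is exactly a framed cobordism between the two framed preimages, with framing pulled back from the interpolating basis. Uniqueness of tubular neighborhoods up to ambient isotopy gives the analogous independence statement for $\Psi$. Once these are in hand, $\Phi\circ\Psi=\mathrm{id}$ is tautological: the Thom-collapse map of $(M,\nu)$ has a regular value (the image of $0\in D^n$) whose preimage with the induced framing is $(M,\nu)$ by construction. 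For $\Psi\circ\Phi=\mathrm{id}$, given smooth $f$ with regular value $y$, I would construct an explicit homotopy from $f$ to the Thom-collapse map of $(f^{-1}(y),\nu)$ by radially deformation-retracting $S^n\setminus\{-y\}$ onto a small neighborhood of $y$ and interpolating via $df$ near $M$. Finally, the homomorphism property follows because the addition in $\Omega_k^{fr}(\R^{n+k})$ by disjoint union corresponds, under $\Psi$, to the pinch-map description of addition in $\pi_{n+k}(S^n)$: after translating the two framed manifolds to opposite sides of a hyperplane in $\R^{n+k}$, the combined Thom-collapse map factors through the pinching of $S^{n+k}$ along that hyperplane to $S^{n+k}\vee S^{n+k}$.
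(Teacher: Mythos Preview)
Your proposal is correct in outline and follows the same Pontryagin--Thom strategy as the paper, but it differs in two organizational choices worth noting.

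First, you let the regular value $y$ vary with $f$ (found via Sard) and then argue independence of that choice, whereas the paper fixes $y\in\R^n\subset S^n$ once and for all and uses the Transversality Homotopy Theorem (Corollary~\ref{transversalitycor}) to replace any representative by one transverse to that fixed $y$. The paper explicitly remarks that this is the payoff of its transversality-based approach: with $y$ fixed, $\Phi$ is defined without any auxiliary choice of regular value, which streamlines both the well-definedness argument and the homomorphism verification in Step~5. Your route is the classical Milnor--Pontryagin one; it works, but the ``independence of $y$'' step then requires a separate argument (the paper handles it in Step~4 via an explicit compactly supported translation $\tau$ of $\R^n$, not via a common-regular-value trick, which would be circular as stated).

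Second, you package bijectivity as ``construct $\Psi$ and check both compositions,'' while the paper proves surjectivity (your $\Phi\circ\Psi=\mathrm{id}$, Step~6) and injectivity (your $\Psi\circ\Phi=\mathrm{id}$ restricted to the kernel, Step~7) separately. The content is the same; in particular, your sketch of $\Psi\circ\Phi=\mathrm{id}$ is the step that carries the real work, and ``radially deformation-retracting $S^n\setminus\{-y\}$'' is too coarse to produce the required homotopy. The paper's Step~7 shows what is actually needed: one must first homotope $f$ to agree with the collapse map on a tubular neighborhood of $M$ (this uses a Taylor-expansion estimate to ensure the straight-line homotopy never hits $0$ on the annular region), and only then collapse the complement. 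Your proposal would benefit from flagging this as the delicate point.
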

\noindent As already mentioned above, we defer the detailed proof of this theorem to the third part and rather keep the preceding ideas in mind to introduce transversality now. 

\section{Transversality}
In the early 1950s, Thom established a more general version of framed cobordism, called cobordism. When doing so, one central question arising was whether for a smooth map $f:M\to N$ between smooth manifolds the preimage $f^{-1}(S)$ of a smooth submani\-fold $S\subset N$ is also a smooth submani\-fold of $M$. When $S$ just consists of a point $y$, as in the case Pontryagin worked on, the Preimage Theorem provides a sufficient condition for this. It can be deduced from the Implicit Function Theorem and thus had already been known for some time.  
\begin{thm}\label{preimagethm}
\textbf{(Preimage Theorem)}
Let $f:M\to N$ be a smooth map between smooth manifolds and $y\in N$ be a regular value of $f$. Then $f^{-1}(y)$ is a smooth submanifold of $M$. If $y\in im(f)$, the codimension of $f^{-1}(y)$ in $M$ is equal to the dimension of $N$ and $T_xf^{-1}(y)=ker(df_x)$ for every $x\in f^{-1}(y)$. 
\end{thm}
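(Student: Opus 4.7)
The plan is to show that $f^{-1}(y)$ is a smooth submanifold by verifying the slice condition locally at each point, using the Implicit Function Theorem. Since being a submanifold is a local property and the statement is vacuous at points outside $f^{-1}(y)$, I would fix $x\in f^{-1}(y)$, set $m\defeq\dim M$ and $n\defeq\dim N$, and pass to smooth charts $(U,\varphi)$ around $x$ and $(V,\psi)$ around $y$ with $\varphi(x)=0$, $\psi(y)=0$. This reduces the problem to studying the smooth map $\hat f\defeq\psi\circ f\circ\varphi^{-1}$ on an open neighborhood of $0$ in $\R^m$, whose differential at $0$ is surjective onto $\R^n$ because $y$ is a regular value.

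Next, I would exploit the surjectivity of $d\hat f_0$ to invoke the Implicit Function Theorem. Since the Jacobian of $\hat f$ at $0$ has rank $n$, after permuting the $m$ coordinates on the source I may assume that the submatrix formed by its first $n$ columns is invertible. Splitting $\R^m$ as $\R^n\times\R^{m-n}$ with coordinates $(u,v)$, the Implicit Function Theorem then produces a smooth function $g$ defined on a neighborhood of $0$ in $\R^{m-n}$, with $g(0)=0$, such that locally $\hat f(u,v)=0$ if and only if $u=g(v)$. Consequently $\hat f^{-1}(0)$ coincides near $0$ with the graph of $g$, which is a smooth $(m-n)$-dimensional submanifold of $\R^m$. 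Transporting this slice back through $\varphi$ yields the required submanifold chart for $f^{-1}(y)$ near $x$, so the codimension in $M$ is $n=\dim N$ at every point of the image.

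For the tangent space identity, I would argue as follows. Since $f$ is constant with value $y$ on $f^{-1}(y)$, any smooth curve through $x$ lying in $f^{-1}(y)$ is mapped by $f$ to the constant curve $y$, so its velocity $w$ at $x$ satisfies $df_x(w)=0$; hence $T_xf^{-1}(y)\subseteq\ker(df_x)$. Both subspaces have dimension $m-n$, the left side by the submanifold structure just established and the right side because $df_x\colon T_xM\to T_yN$ is surjective by the regular value hypothesis. Equality of dimensions therefore forces $T_xf^{-1}(y)=\ker(df_x)$.

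The genuine content of the argument sits entirely in the Implicit Function Theorem; the rest is bookkeeping. The only delicate point I would double-check is that the locally constructed slice charts assemble into a single smooth manifold structure on $f^{-1}(y)$ compatible with the subspace topology from $M$. This is routine, however, because the charts are built directly from ambient charts of $M$ via smooth coordinate changes, so their transition maps are automatically smooth. I do not anticipate any deeper obstacle.
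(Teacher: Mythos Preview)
Your argument is correct, and it follows precisely the route the paper itself indicates: the paper does not give a detailed proof of the Preimage Theorem but simply remarks that it ``can be deduced from the Implicit Function Theorem and thus had already been known for some time.'' Your reduction to local coordinates, application of the Implicit Function Theorem after reordering the source variables, and the dimension-count argument for the tangent space identity constitute exactly such a deduction, so there is nothing to add.
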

\noindent That is, the points $x\in M$ that satisfy $f(x)=y$ for a regular value $y$ of $f$ form a smooth submanifold of $M$.  The Preimage Theorem can be generalized to an arbitrary smooth submanifold $S$ of $N$ instead of a regular value $y$ of $f$, if one requires all points in $S$ to be regular values of f.  However, this condition was too confining for Thom's purposes. Therefore, he searched for less restrictive, but still sufficient conditions to make $f^{-1}(S)$ a smooth submanifold of $M$.\\[1ex]
The problem can be reduced to a local one: $f^{-1}(S)$ is a smooth manifold if and only if for every $x\in f^{-1}(S)$ there is an open neighborhood $U\subset M$ such that $f^{-1}(S)\cap U$ is a smooth manifold. Now let $x\in M$ and $y\in S$  such that $f(x)=y$. Let $n$ denote the codimension of $S$ in $N$. Then, since $S$ is a smooth submanifold of $N$, there exists a neighborhood $V\subset N$ of $y$ and a submersion $\phi: V \to \R ^n$ such that $S\cap V=\phi^{-1}(0)$. Thus, for a neighborhood $U\subset M$ of $x$, $f^{-1}(S)\cap U$ coincides with $(\phi\circ f)^{-1}(0)$. This allows us to make use of a case we already know: By the Preimage Theorem, a sufficient condition for $f^{-1}(S)\cap U$ to be a smooth manifold is that $0$ is a regular value of $\phi\circ f$. In particular, $f^{-1}(S)\cap U$ has codimension $n$ in $M$.\\[1ex]
Now let us investigate in which case this happens. By the chain rule for the differential, we have that $d(\phi\circ f)_x=d\phi _y\circ df_x$. Therefore, $d(\phi\circ f)_x$ is surjective if and only if $d\phi _y$ maps $im(df_x)$ onto $T_{\phi (y)}\R ^n$. However, $d\phi _y$ is a surjective homomorphism with $ker(d\phi _y)=T_yS$. This means that $im(df_x)$ is mapped onto $T_{\phi (y)}\R ^n$ exactly when $im(df_x)$ and $T_yS$ together span the whole space $T_y N$. \\[1ex] 
These considerations motivate the following definition going back to Thom:\newpage
\begin{defn}
Let $f:M\to N$ be a smooth map between smooth manifolds and $S\subset N$ be a smooth submanifold. $f$ is called transverse to $S$ in $x\in M$, if one has the following: \[f(x)\in S \implies T_{f(x)}S + im(df_x)=T_{f(x)}N\] 
$f$ is called transverse to $S$, if $f$ is transverse to $S$ in $x$ for every $x\in M$.
\end{defn}
\begin{rem}
By definition, $f$ is transverse to $\{x\}$ for every $x\notin f^{-1}(S)$ trivially. Authors use the terms "transverse" and "transversal" synonymously.
\end{rem}
\noindent Notice that transversality is a generalization of the concept of regular values: \[f \text{ has regular value } y\in N \Leftrightarrow f \text{ is transverse to }\{y\}\subset N\] 
\begin{exmp}
Let $M=\{x\in\R^3\,|\, x_1=0\}$, $N=\{x\in\R^3 \,|\, x_2=0\}$ and $S=\{x\in\R^3\,|\, x_2=x_3=0\}$ and let \[f:M\to N\text{, }(0,x_2,x_3)\mapsto (0,0,x_3)\] be the projection. Then for every $x\in f^{-1}(S)$ we have that \[T_{f(x)}S + im(df_x)=\{x\in\R^3\,|\, x_2=x_3=0\}+\{x\in\R^3\,|\, x_1=x_2=0\}=T_{f(x)}N.\]
This means that $f$ is transverse to $S$, even though the differential $df_x$ is not surjective for any $x\in M$.
\end{exmp}
\noindent As one can see in the example, if $S$ has dimension $\geq 1$, it is generally less restrictive to require $f$ to be transverse to $S$ instead of requiring every point in $S$ to be a regular value of $f$.
Nevertheless, Thom was able to prove the desired statement while requiring only transversality in his Transversality Theorem.
\begin{thm}\label{trans} \textbf{(Transversality Theorem)}
If a map $f:M\to N$ between smooth manifolds is transverse to a smooth submanifold $S\subset N$, then $f^{-1}(S)$ is a smooth submanifold of $M$. 
If  $f^{-1}(S) \neq \emptyset$, the codimension of $f^{-1}(S)$ in $M$ is equal to the codimension of $S$ in $N$.
\end{thm}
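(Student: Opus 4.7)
The plan is to reduce the global claim to the Preimage Theorem via the local description of $S$, along the lines already sketched in the paragraphs preceding the definition of transversality. Since being a smooth submanifold is a local property, it suffices to produce, for every $x\in f^{-1}(S)$, an open neighborhood $U\subset M$ of $x$ in which $f^{-1}(S)\cap U$ is a smooth submanifold of $M$ of codimension $n$, where $n$ denotes the codimension of $S$ in $N$.

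First I would exploit the local structure of $S$: since $S$ is a smooth submanifold of codimension $n$, near $y\defeq f(x)\in S$ there exist an open set $V\subset N$ and a submersion $\phi: V\to \R^n$ such that $S\cap V = \phi^{-1}(0)$ and $ker(d\phi_y) = T_yS$. Setting $U\defeq f^{-1}(V)$, we obtain the identification
\[
f^{-1}(S)\cap U \;=\; (\phi\circ f)^{-1}(0),
\]
so the task reduces to showing that $0$ is a regular value of $\phi\circ f: U\to \R^n$ and then invoking Theorem \ref{preimagethm}.

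The verification of the regular value condition is the heart of the argument and is exactly where the transversality hypothesis enters. By the chain rule, $d(\phi\circ f)_{x'} = d\phi_{f(x')}\circ df_{x'}$ for every $x'\in U$. At any $x'\in (\phi\circ f)^{-1}(0)$ the differential $d\phi_{f(x')}$ is surjective with kernel $T_{f(x')}S$, so it carries $im(df_{x'})$ onto $T_0\R^n$ precisely when $T_{f(x')}S + im(df_{x'}) = T_{f(x')}N$. This is exactly transversality of $f$ to $S$ at $x'$, so $d(\phi\circ f)_{x'}$ is surjective, as required.

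Applying the Preimage Theorem then yields that $f^{-1}(S)\cap U$ is a smooth submanifold of $U$ of codimension $\dim\R^n = n$, which, patched together over all $x\in f^{-1}(S)$, gives both conclusions of the theorem. I do not expect a serious technical obstacle: the real content is the factorization through $\phi$ and the recognition that the transversality condition is precisely tailored to make the chain rule produce a surjective composite. The only care needed is to verify that a submersive local defining function $\phi$ with $ker(d\phi_y) = T_yS$ exists near every $y\in S$, which is standard and follows from the slice charts characterizing a smooth submanifold.
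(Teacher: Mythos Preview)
Your proposal is correct and follows essentially the same approach as the paper: the paper's proof is precisely the discussion preceding the definition of transversality (local reduction via a submersion $\phi$ with $S\cap V=\phi^{-1}(0)$, chain rule to show $0$ is a regular value of $\phi\circ f$, then the Preimage Theorem), and you have reproduced that argument faithfully.
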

\noindent A proof of this theorem was already given above.
\begin{rem}
When deriving the theorem, we applied the Preimage Theorem to $\phi\circ f$ and its regular value $0$. In particular, we obtain that $T_x((\phi\circ f)^{-1}(0))=ker(d(\phi\circ f)_x)$ for every $x\in (\phi\circ f)^{-1}(0)$. Since $ker(d\phi _{f(x)})=T_{f(x)}S$, we have that $ker(d(\phi\circ f)_x)=(df_x)^{-1}(T_{f(x)}S)$. Therefore, the above equation implies $T_x(f^{-1}(S))=(df_x)^{-1}(T_{f(x)}S)$ for every $x\in f^{-1}(S)$. It follows that 
\[df_x(T_x(f^{-1}(S)))=T_{f(x)}S \text{ for every } x\in f^{-1}(S).\]
\end{rem}
\noindent Such as transversality is a generalization of regularity, the Transversality Theorem can be seen as a generalization of the Preimage Theorem. This is because one can just choose $S=\{y\}$ for a regular value $y$ of $f$.\\[1ex]
\noindent An important special case is the case where the underlying map is an inclusion $i:M\to N$ of a smooth submanifold $M$ of $N$, where $S\subset N$ is another smooth submanifold. Then we have that $i^{-1}(S)=M\cap S$. Moreover, for $x\in M$, $di_x:T_xM\to T_{i(x)}N$ is also just the inclusion  $T_xM\subset T_xN$. Therefore, $i$ is transverse to $S$ if and only if $T_xM+T_xS=T_xN$ for all $x\in M\cap S$. In this case the two smooth submanifolds $M, S \subset N$ are called transverse. \\
\begin{figure}[h]
\squeezeup
\begin{center}
\includegraphics[width = 300pt]{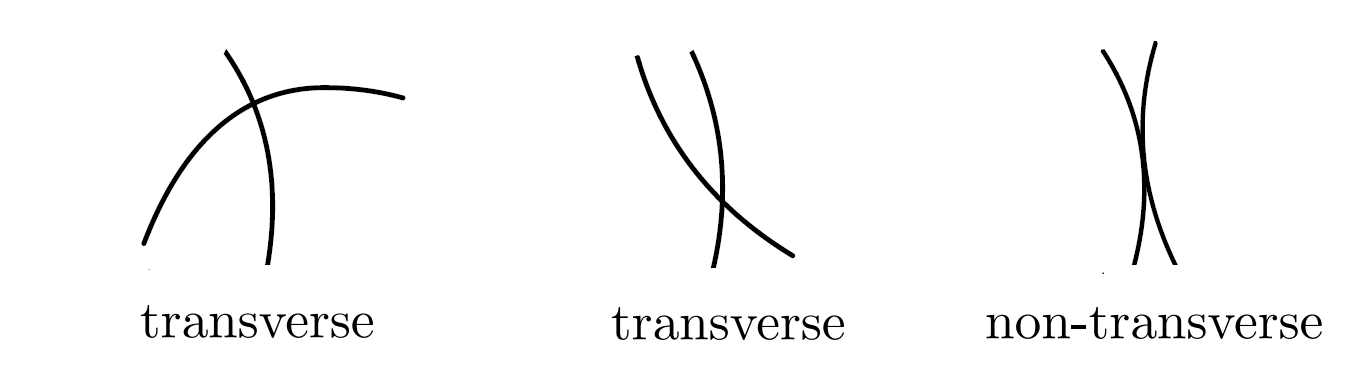}
\caption{\label{intersection} Transverse and non-transverse intersections of two curves in $\R^2$}
\end{center}
\squeezeup
\squeezeup
\end{figure}
\begin{thm}
The intersection of two transverse smooth submanifolds of a smooth manifold $M$ is a smooth submanifold of $M$ again.
\end{thm}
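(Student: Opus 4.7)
The plan is to reduce the statement directly to the Transversality Theorem (Theorem~\ref{trans}) by applying it to an inclusion map. Let $A, B \subset M$ be two transverse smooth submanifolds, and consider the inclusion $i : A \hookrightarrow M$. As recorded in the paragraph immediately preceding the statement, the condition that $A$ and $B$ are transverse as submanifolds — namely $T_xA + T_xB = T_xM$ for all $x \in A \cap B$ — is precisely the condition that the smooth map $i$ is transverse to the submanifold $B$, because $di_x$ is just the inclusion $T_xA \subset T_xM$.

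Applying the Transversality Theorem to $i$ and $B$ then yields that $i^{-1}(B) = A \cap B$ is a smooth submanifold of $A$, and moreover that, when the intersection is nonempty, its codimension in $A$ equals the codimension of $B$ in $M$.

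The remaining step is to upgrade ``smooth submanifold of $A$'' to ``smooth submanifold of $M$.'' This rests on the transitivity of the submanifold relation: given a slice chart for $A \cap B$ inside $A$ and a slice chart for $A$ inside $M$, one composes them to produce a slice chart for $A \cap B$ inside $M$. This is a standard and essentially routine fact about smooth manifolds, and it is the only step of the argument not handled by a direct citation of Theorem~\ref{trans}; it therefore constitutes the main (and rather mild) technical point of the proof. As a byproduct, the codimension tally yields $\mathrm{codim}_M(A \cap B) = \mathrm{codim}_M A + \mathrm{codim}_M B$, matching the expected intersection-theoretic behaviour.
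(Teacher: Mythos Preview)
Your proof is correct and follows essentially the same approach as the paper, which simply states that the result follows directly from the Transversality Theorem applied to the inclusion map as described in the preceding paragraph. If anything, your write-up is more careful: you make explicit the transitivity step (a submanifold of $A$ is a submanifold of $M$) that the paper leaves implicit.
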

\noindent This follows directly from the Transversality Theorem.
\begin{exmp} Consider the smooth submanifolds $S^2$ and $\R^2\times\{0\}$ of $\R^3$. Let $x\in S^2\cap(\R^2\times\{0\})=S^1\times\{0\}$. Then, under the identification $T_x\R^3\cong \R^3$, we have that $T_xS^2=x^\perp=\{v\in\R^3|v\perp x\}$ and $T_x(\R^2\times\{0\})=\R^2\times\{0\}$. Thus, $T_xS^2+T_x(\R^2\times\{0\})=\R^3\cong T_x\R^3$, so $S^2$ and $\R^2\times\{0\}$ are transverse. The above theorem implies that $S^1\times\{0\}$ is a smooth submanifold of $\R^3$ again.\\
Without the transversality assumption the theorem does not hold anymore: Consider the smooth submanifolds $S^2$ and $\R^2\times\{0,1\}$ of $\R^3$. Let $x\vcentcolon =(0,0,1)\in \R^3$. Then, $S^2\cap(\R^2\times\{0,1\})=(S^1\times\{0\})\cup\{x\}$, which can not be a manifold, since $S^1\times\{0\}$ is $1$-dimensional and $\{x\}$ is $0$-dimensional. Also notice that $T_xS^2+T_x(\R^2\times\{0,1\})=\R^2\times\{0\}+\R^2\times\{0\}\neq T_x\R^3$. Therefore, the theorem can not be applied in this case.
\begin{figure}[h]
\begin{center}
\includegraphics[width = 300pt]{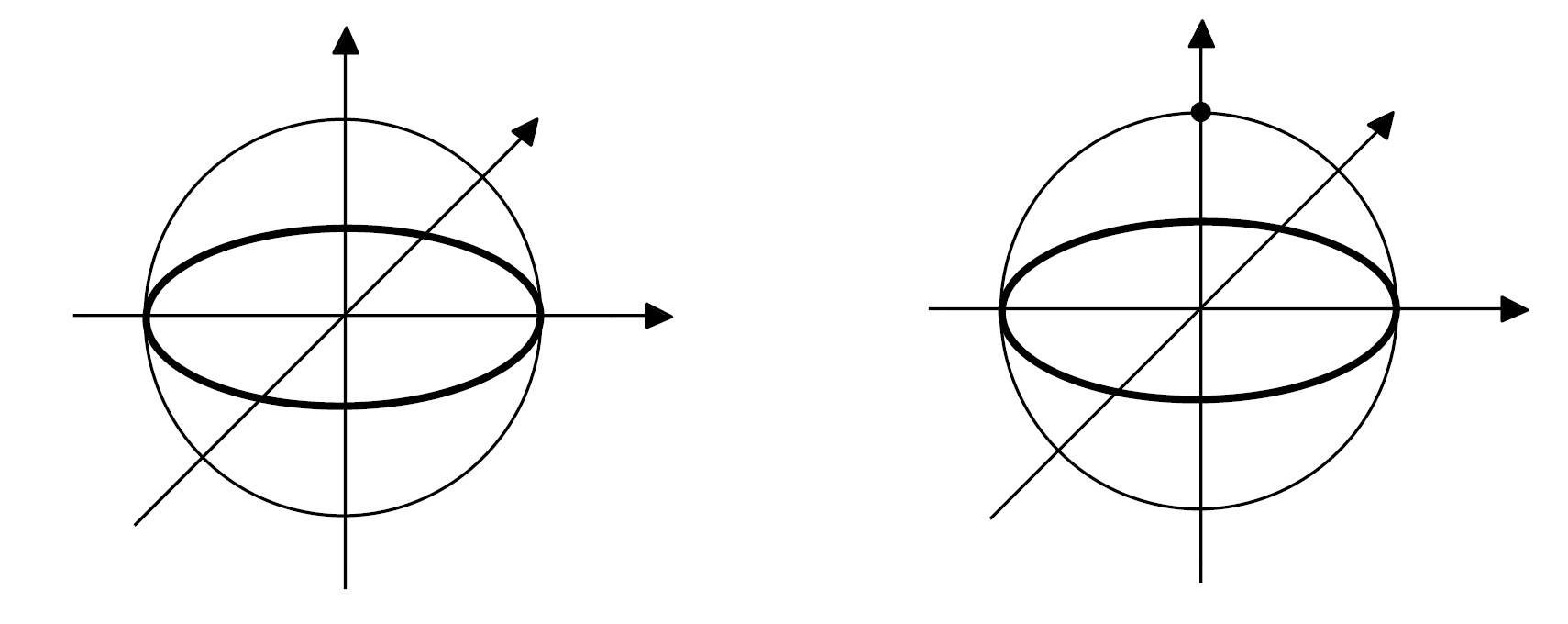}
\caption{\label{example} The smooth submanifold $S^1\times\{0\}\subset \R^3$ and the set $(S^1\times\{0\})\cup\{x\}$, which is not a smooth manifold}
\end{center}
\squeezeup
\squeezeup
\end{figure}
\end{exmp}
\noindent We now focus on another interesting property of transversality discovered by Thom, namely that it is generic. By this we mean that for a smooth submanifold $S$ of $N$ any smooth map $f:M\to N$ can be deformed into a map $g$ transverse to $S$ such that $f$ and $g$ are arbitrarily "close" to each other. Thom gave a proof for this statement in his famous paper from 1954 \cite{Th54}. He regarded the group of diffeomorphisms of $N$ that fix $N\backslash U$ for a tubular neighborhood $U\subset N$ of $S$ and considered the composition of $f$ with such a diffeomorphism that arises from deforming the identity on $N$ by a sufficiently small amount. He showed that this composition then is transverse to $S$. \\ One important corollary of this statement is that every smooth map between smooth manifolds is homotopic to a transverse one. Since Thom's original proof is rather general and extensive, we follow a more elementary approach provided by John Lee (\cite{Le12}, Ch. 6)) and derive genericity of transversality and the homotopy corollary from it.

\begin{thm}\textbf{(Parametric Transversality Theorem)}
Let $M$, $N$, $T$ be smooth manifolds and $S\subset N$ be a smooth submanifold. If a smooth map $F:M\times T\to N$ is transverse to $S$, then for almost every $t\in T$, the map $F_t:M\to N$, $x\mapsto F(x,t)$ is transverse to $S$.
\end{thm}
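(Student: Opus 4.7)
The plan is to reduce the statement to Sard's Theorem applied to a well-chosen projection. First, I would invoke the Transversality Theorem (Theorem \ref{trans}) on $F$ itself: since $F$ is transverse to $S$, the preimage $W\defeq F^{-1}(S)$ is a smooth submanifold of $M\times T$, and by the remark following the Transversality Theorem, for every $(x,t)\in W$ we have $T_{(x,t)}W=(dF_{(x,t)})^{-1}(T_{F(x,t)}S)$.

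Next I would consider the smooth projection $\pi:W\to T$, $(x,t)\mapsto t$, and apply Sard's Theorem (Theorem \ref{sard}) to conclude that almost every $t\in T$ is a regular value of $\pi$. The entire theorem will follow once I show that whenever $t\in T$ is a regular value of $\pi$, the restricted map $F_t$ is transverse to $S$.

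To prove this implication, I would fix such a $t$ and a point $x\in F_t^{-1}(S)$, set $y\defeq F(x,t)\in S$, and take an arbitrary $w\in T_yN$. By transversality of $F$, I can write $w=s+dF_{(x,t)}(a,b)$ for some $s\in T_yS$, $a\in T_xM$, $b\in T_tT$. The regularity of $\pi$ at $(x,t)$ means that $d\pi_{(x,t)}:T_{(x,t)}W\to T_tT$ is surjective, so there exists $a_0\in T_xM$ with $(a_0,b)\in T_{(x,t)}W$; by the tangent space identification above, this forces $dF_{(x,t)}(a_0,b)\in T_yS$. The decomposition $dF_{(x,t)}(a,b)=d(F_t)_x(a-a_0)+dF_{(x,t)}(a_0,b)$ then exhibits $w$ as the sum of $s+dF_{(x,t)}(a_0,b)\in T_yS$ and $d(F_t)_x(a-a_0)\in\operatorname{im}(d(F_t)_x)$, establishing transversality of $F_t$ to $S$ at $x$.

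The main obstacle is precisely this last algebraic step: one has to use both the transversality of $F$ and the regularity of $\pi$ in tandem, and the trick is to realize that the tangent space $T_{(x,t)}W$ is large enough to absorb the $T_tT$-component of any preimage vector modulo $T_yS$. Everything else (the Transversality Theorem, Sard's Theorem, and the chain rule computation for $d\pi$) is routine.
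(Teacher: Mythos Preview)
Your proposal is correct and follows essentially the same route as the paper's proof: both pull back $S$ along $F$ to get a submanifold of $M\times T$, project to $T$, invoke Sard, and then carry out the identical linear-algebra trick of using a tangent vector of $F^{-1}(S)$ to absorb the $T_tT$-component of $(a,b)$ modulo $T_yS$. The only differences are notational (your $W,\pi,(a,b),(a_0,b)$ correspond to the paper's $Q,pr|_Q,(w_1,r_1),(w_2,r_2)$).
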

\begin{rem}
This theorem is just called the Transversality Theorem by some authors. It can be used to derive further transversality statements.
\end{rem}
\begin{proof}
By the Transversality Theorem, $Q\defeq F^{-1}(S)$ is a smooth submani\-fold of $M\times T$. Let $pr:M\times T\to T$ denote the projection onto $T$. We want to show that if $t\in T$ is a regular value of $pr|_Q$, $F_t$ is transverse to $S$. Then the claim follows because almost every $t\in T$ is a regular value of $pr|_Q$ by Sard's Theorem.\\
Let $t\in T$ be a regular value of $pr|_Q$. Choose an $x\in F_t^{-1}(S)$ and define $y\defeq F_t(x)\in S\subset N$. We have to show that 
\[T_yN=T_yS+d(F_t)_x(T_xM).\]
In order to see this equality, we collect what we already know: Since $F$ is transverse to $S$ we have that 
\[T_yN=T_yS+dF_{(x,t)}(T_{(x,t)}(M\times T)).\]
Also, because $t$ is a regular value of $pr|_Q$, it holds that 
\[T_tT=d(pr)_{(x,t)}(T_{(x,t)}Q).\]
Next, by the remark after the Transversality Theorem, we know that 
\[dF_{(x,t)}(T_{(x,t)}Q)=T_yS. \]
In order to show the first equality, for any $v\in T_yN$, there need to exist $u\in T_yS$ and $w\in T_xM$ such that 
\[v=u+d(F_t)_x(w). \]
First, due to the second equality, there are $u_1\in T_yS$ and $(w_1, r_1)\in T_xM\times T_tT\cong T_{(x,t)}(M\times T)$ such that 
\[v=u_1+dF_{(x,t)}(w_1, r_1).\]
Next, the third equation implies that there exists $(w_2,r_2)\in T_{(x,t)}Q$ such that $d(pr)_{(x,t)}(w_2,r_2)=r_1$. As the differential of a projection is the correspon\-ding projection on the tangent spaces, it follows that $r_2=r_1$. Furthermore, notice that
\[dF_{(x,t)}(w_1, r_1)=dF_{(x,t)}(w_2, r_1)+dF_{(x,t)}(w_1-w_2,0).\]
Now the fourth equation implies that $dF_{(x,t)}(w_2, r_1)=dF_{(x,t)}(w_2, r_2)\- \in dF_{(x,t)}(T_{(x,t)}Q)=T_yS$.\\
Let $j_t:M\to M\times T$, $x'\mapsto (x',t)$ denote the inclusion. Then $F_t=F\circ j_t$ and $d(j_t)_x(w_1-w_2)=(w_1-w_2,0)$ which implies $dF_{(x,t)}(w_1-w_2,0)=dF_{(x,t)}\circ d(j_t)_x(w_1-w_2)=d(F_t)_x(w_1-w_2)$. Thus, using the sixth equation, we obtain that the fifth equation holds, if we pick $u=u_1+dF_{(x,t)}(w_2, r_1)$ and $w=w_1-w_2$. This proves the theorem.
\end{proof}
\noindent The Parametric Transversality Theorem allows to explain more precisely what genericity of transversality means. For the intuition consider the special case where $f:M\to\R^k$ is a smooth map into the $k$-dimensional Euclidean space. Denote by $B$ the open ball in $\R^k$ and consider the smooth function \[F:M\times B\to \R^k\text{, }(x,t)\mapsto f(x)+t.\] 
We observe that $dF_{(x,t)}: T_{(x,t)}(M\times B)\to T_{F(x,t)}\R^k$ must be surjective, and therefore, that for any submanifold $S$ of $\R^k$, $F$ is transverse to $S$. Now, the Parametric Transversality Theorem implies that for almost every $t\in B$ the map $F_t:M\to\R^k$, $x\mapsto f(x)+t$ is transverse to $S$. Thus, any smooth map $f:M\to\R^k$ can be changed by an arbitrary small amount into a map which is transverse to $S$.\\[1ex]
\noindent With a bit of extra work, this argument can be generalized to obtain a corresponding statement for smooth maps between arbitrary smooth manifolds. More precisely, Thom showed the following \cite{Th54}:
\begin{thm}\textbf{(Transversality Approximation Theorem)}
Let $f:M\to N$ be a smooth map between smooth manifolds and $S\subset N$ be a smooth submanifold. Let $\delta: M\to \R$ be a positive function and $d$ be a metric defining the topology of $N$. Then there is a smooth map $g:M\to N$ such that $d(f(x),g(x))\leq \delta (x)$ for all $x\in M$ and $g$ is transverse to $Z$. If $f$ is already transverse to $S$ on a closed subset $A\subset N$, $g$ can be chosen such that $f|_A=g|_A$.
\end{thm}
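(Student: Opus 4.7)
The plan is to reduce the statement to the Parametric Transversality Theorem by constructing a smooth family $F : M \times B \to N$, parametrised by an open Euclidean ball $B \subset \R^K$, such that (i) the full map $F$ is transverse to $S$, (ii) $F_0 = f$, and (iii) each restriction $F_t = F(\cdot, t)$ satisfies $d(f(x), F_t(x)) \leq \delta(x)$ for every $x \in M$, with the rel-$A$ condition built into the construction. Granted such a family, the Parametric Transversality Theorem yields a set of full measure of $t \in B$ for which $F_t$ is transverse to $S$, and any such $F_t$ then serves as the required $g$.

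To build $F$, I first use the Whitney embedding theorem to realise $N$ as a closed smooth submanifold of some $\R^K$ and apply the Tubular Neighborhood Theorem to obtain an open neighbourhood $V \subset \R^K$ of $N$ together with a smooth submersive retraction $\pi : V \to N$. With $B \subset \R^K$ the open unit ball, I set
\[
  F(x, t) \defeq \pi\bigl(f(x) + \rho(x)\, t\bigr),
\]
where $\rho : M \to [0, \infty)$ is a smooth cut-off function required to satisfy three conditions: $\rho$ vanishes on an open neighbourhood $U'$ of $A$ on which $f$ is already transverse to $S$ (such a $U'$ exists because transversality is an open condition on $M$ and $A$ is closed); $\rho$ is strictly positive on $M \setminus U'$; and $\rho(x)$ is small enough that $f(x) + \rho(x) t \in V$ and $d\bigl(\pi(f(x) + \rho(x) t),\, f(x)\bigr) \leq \delta(x)$ hold for every $(x, t) \in M \times \overline{B}$.

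To verify transversality of $F$ to $S$, consider a point $(x, t) \in M \times B$. If $\rho(x) > 0$, the partial differential in the $t$-direction is $\rho(x)\cdot d\pi_{f(x) + \rho(x) t}$, which is surjective onto $T_{F(x, t)} N$ because $\pi$ is a submersion; hence $F$ is a submersion at $(x, t)$, and in particular transverse to $S$. If $\rho(x) = 0$, then $x \in U'$ and $F(x, t) = f(x)$ is independent of $t$, so the image of $dF_{(x, t)}$ coincides with that of $df_x$, and transversality of $F$ to $S$ at $(x, t)$ reduces to that of $f$ at $x$, which holds by the defining property of $U'$. Thus $F$ is transverse to $S$ throughout $M \times B$, and the Parametric Transversality Theorem provides a $t \in B$ such that $g \defeq F_t$ is transverse to $S$; the bounds imposed on $\rho$ then yield $d(f(x), g(x)) \leq \delta(x)$ and $g|_A = f|_A$.

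The main obstacle will be producing the function $\rho$: the pointwise smallness demanded of it — that $f(x) + \rho(x) t$ stay within $V$, and that its push-forward under $\pi$ remain $\delta(x)$-close to $f(x)$ in the metric $d$ — are quantitative constraints that depend continuously but non-uniformly on $x$, and they must coexist with the conditions that $\rho$ be smooth and vanish on a neighbourhood of $A$. Following Lee, one covers $M$ by a locally finite collection of precompact open sets on each of which uniform bounds can be extracted from the continuity of $\pi$, $f$, $d$ and $\delta$ together with the positivity of the distance from $f(x)$ to $\partial V$, and then pastes these local bounds together by means of a partition of unity subordinate to the cover, arranging simultaneously that $\rho \equiv 0$ near $A$.
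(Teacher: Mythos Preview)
The paper does not actually prove the Transversality Approximation Theorem; it merely states the result and cites Thom. The closest the paper comes is its proof of the Transversality Homotopy Theorem (Corollary~\ref{transversalitycor}), which uses essentially the same construction you propose---embed $N$ into $\R^K$, take a tubular retraction $r$, and set $F(x,t)=r(f(x)+\tilde\delta(x)t)$---but without any control on the metric distance $d(f,F_t)$. Your proposal correctly identifies the extra work needed for the genuine approximation statement: the scaling function $\rho$ must be bounded pointwise so that $\pi(f(x)+\rho(x)t)$ stays $\delta(x)$-close to $f(x)$ in the given metric, and your sketch of the partition-of-unity construction producing such a $\rho$ is the standard and correct route.

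One small slip: you ask that $\rho\equiv 0$ on an open neighbourhood $U'$ of $A$ and simultaneously $\rho>0$ on $M\setminus U'$. No continuous function can do both, since continuity forces $\rho$ to vanish on $\partial U'\subset M\setminus U'$. The easy repair is to nest two open sets $A\subset U'\subset\overline{U'}\subset U''$ with $f$ transverse to $S$ throughout $U''$, take $\rho\equiv 0$ on $U'$ and $\rho>0$ on $M\setminus\overline{U'}$, and observe that on the collar $\overline{U'}\setminus U'$ either branch of your transversality verification applies. Alternatively, the paper's own device in its homotopy-theorem proof is to let the scaling function vanish \emph{exactly} on $A$; then at $x\in A$ one has $F(x,t)=f(x)$ and $\mathrm{im}\,dF_{(x,t)}\supseteq\mathrm{im}\,df_x$, which already suffices for transversality of $F$ there. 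With either adjustment your argument is complete.
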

\begin{figure}[h]
\squeezeup
\begin{center}
\includegraphics[width = 300pt]{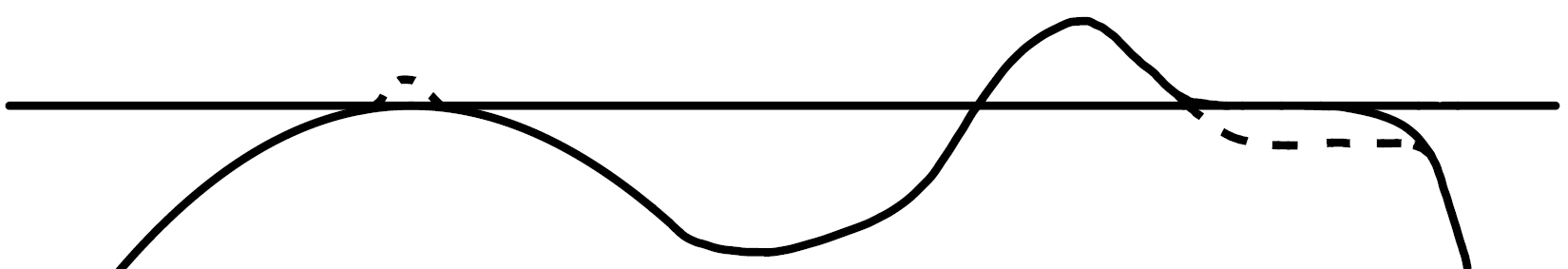}
\caption{\label{approximation} A smooth curve $F:M\to \R^2$ which is not transverse to $\R\times \{0\}\subset \R^2$ is changed by a small amount into a transverse curve}
\end{center}
\squeezeup
\end{figure}

\begin{rem}
Heuristically, this theorem states that the set of smooth maps that are transverse to $S$ is a dense subset of the set of smooth maps. To specify this statement, one needs to choose an appropriate topology on those sets, which, for instance, is done by Morris Hirsch (\cite{Hi76}, Ch. 3.2).
\end{rem}
\noindent The above theorem allows the approximation of smooth maps by transverse ones. Thom used it in combination with an already known approximation theorem going back to Hassler Whitney \cite{Wh36} which allows the approximation of continuous maps by smooth ones in a similar way. One important consequence is that every continuous map between smooth mani\-folds is homotopic to a smooth one (\cite{Le12}, Ch. 6):
\begin{thm} \label{whitney} \textbf{(Whitney Approximation Theorem)}
Let $f:M\to N$ be a continuous map between smooth manifolds. Let $\delta: M\to \R$ be a positive function and $d$ be a metric defining the topology of $N$. Then there is a smooth map $g:M\to N$ such that $d(f(x),g(x))\leq \delta (x)$ for all $x\in M$. If $f$ is already smooth on a closed subset $A\subset N$, $g$ can be chosen such that $f|_A=g|_A$. 
\end{thm}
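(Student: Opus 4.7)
The plan is to reduce the statement to the case of a Euclidean target and then handle that case by smoothly averaging local constants through a partition of unity. The reduction proceeds via the Whitney embedding theorem together with Theorem \ref{tubularneighborhood}.

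First I would embed $N$ as a closed smooth submanifold of some $\R^K$ and apply Theorem \ref{tubularneighborhood} to obtain an open neighborhood $V\subset\R^K$ of $N$ and a smooth retraction $r:V\to N$. Using continuity of $r$, of $\delta$, and of $f$, I would then choose a positive continuous auxiliary function $\tilde\delta:M\to\R$ so small that every $y\in\R^K$ with $\|y-f(x)\|\leq\tilde\delta(x)$ both lies in $V$ and satisfies $d(r(y),f(x))\leq\delta(x)$. This is a local matter, carried out on a precompact exhaustion of $M$ using uniform continuity of $r$ on compact subsets of $V$. Granted such a $\tilde\delta$, it suffices to produce a smooth map $\tilde g:M\to\R^K$ with $\|\tilde g(x)-f(x)\|\leq\tilde\delta(x)$; then $g\defeq r\circ\tilde g$ is the map required by the theorem.

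To build $\tilde g$, by paracompactness I would take a locally finite open cover $\{U_i\}$ of $M$ by precompact sets so small that $\|f(x)-f(y)\|\leq\tilde\delta(x)$ whenever $x,y\in U_i$, pick a point $x_i\in U_i$ for each $i$, and let $\{\psi_i\}$ be a smooth partition of unity subordinate to $\{U_i\}$. Setting
\[
\tilde g(x) \defeq \sum_i \psi_i(x)\,f(x_i),
\]
local finiteness makes $\tilde g$ smooth, and
\[
\tilde g(x)-f(x) = \sum_i \psi_i(x)\bigl(f(x_i)-f(x)\bigr)
\]
is a convex combination of vectors of norm at most $\tilde\delta(x)$, giving the desired bound. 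For the relative version I would read smoothness of $f$ on the closed set $A\subset M$ as the existence of a smooth extension of $f|_A$ to an open neighborhood $W$ of $A$ (the standard convention). Choosing a smooth cutoff $h:M\to[0,1]$ equal to $1$ on a neighborhood of $A$ and supported in $W$, I would replace $\tilde g$ by $h(x)f(x)+(1-h(x))\tilde g(x)$; this remains smooth, equals $f$ on a neighborhood of $A$, and is still a convex combination of points within $\tilde\delta(x)$ of $f(x)$, so composing with $r$ yields $g$ with $g|_A=f|_A$ and $d(g(x),f(x))\leq\delta(x)$ everywhere.

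In my view the main obstacle is the coordinated choice of $\tilde\delta$, of the cover $\{U_i\}$, and of the cutoff $h$: one has to secure simultaneously that $\tilde g$ stays inside the tubular neighborhood $V$, that the retraction $r$ does not enlarge distances past $\delta$, and --- in the relative case --- that smoothness is preserved across $\partial A$. Once these compatibilities are arranged, the partition-of-unity construction itself is routine.
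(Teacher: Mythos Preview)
The paper does not actually prove this theorem; it only states it with a citation to \cite{Le12}, Ch.~6, and to Whitney's original paper. There is therefore no proof in the paper to compare your attempt against.

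That said, your argument is the standard one (and is essentially the proof given in the cited reference): embed $N$ in $\R^K$, retract via a tubular neighborhood, and approximate the $\R^K$-valued map by a partition-of-unity average of constants, handling the relative case with a cutoff supported where $f$ is already smooth. The steps are correct, including your observation that the blended map $h\cdot f+(1-h)\cdot\tilde g$ remains a convex combination of points within $\tilde\delta(x)$ of $f(x)$ and hence stays in the tubular neighborhood. Two minor points worth tightening if you write this out in full: (i) when you require $\|f(x)-f(y)\|\leq\tilde\delta(x)$ for all $x,y\in U_i$, make explicit how you obtain such $U_i$ (shrink so that both the oscillation of $f$ and the variation of $\tilde\delta$ are controlled by, say, $\tfrac{1}{3}\tilde\delta(p)$ at the center $p$); and (ii) note that the statement in the paper has $A\subset N$, which is a typo for $A\subset M$ --- you have already silently corrected this.
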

\begin{cor}\label{whitneycor}
Let $f:M\to N$ be a continuous map between smooth manifolds. Then $f$ is homotopic to a smooth map $g$. If $f$ is already smooth on a closed subset $A\subset M$, $g$ can be chosen such that $f|_A=g|_A$. 
\end{cor}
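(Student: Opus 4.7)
The plan is to first invoke Theorem \ref{whitney} to obtain a smooth map $g$ that is pointwise very close to $f$, and then to construct an explicit homotopy from $f$ to $g$ by linearly interpolating in an ambient Euclidean space and projecting the interpolation back to $N$ through a tubular neighborhood retraction.

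First, I would embed $N$ as a smooth submanifold of some $\R^m$ (by a Whitney embedding theorem). The Tubular Neighborhood Theorem (Theorem \ref{tubularneighborhood}) then provides an open neighborhood $U \subset \R^m$ of $N$ together with a smooth retraction $r : U \to N$. Next, by continuity of $f$ combined with a paracompactness/partition of unity argument, I would produce a positive continuous function $\delta : M \to \R$ with the property that for every $x \in M$ the closed Euclidean ball of radius $\delta(x)$ around $f(x)$ lies inside $U$. Applying Theorem \ref{whitney} with this $\delta$ and the metric on $N$ inherited from $\R^m$ yields a smooth map $g : M \to N$ satisfying $d(f(x), g(x)) \leq \delta(x)$ everywhere and, in the relative case, $g|_A = f|_A$.

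With $g$ in hand, the homotopy is
\[H : M \times [0,1] \to N, \qquad H(x,t) \defeq r\bigl((1-t)\,f(x) + t\,g(x)\bigr).\]
The choice of $\delta$ forces the whole segment from $f(x)$ to $g(x)$ to lie inside $U$, so $H$ is well-defined and continuous, and one reads off $H(\cdot, 0) = r \circ f = f$ and $H(\cdot, 1) = r \circ g = g$. If $x \in A$, then $g(x) = f(x)$ and therefore $H(x,t) = r(f(x)) = f(x)$ for every $t$, which establishes the relative statement.

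The main obstacle is producing the function $\delta$ globally: when $M$ is noncompact, a single positive constant will not suffice, so a careful partition of unity argument is needed to guarantee that the line segments land uniformly inside the tubular neighborhood. A conceptually cleaner alternative that avoids invoking Whitney embedding altogether is to equip $N$ with a Riemannian metric and exploit a tubular neighborhood of the diagonal in $N \times N$: the homotopy $H(x,t) = \exp_{f(x)}\bigl(t \cdot \exp_{f(x)}^{-1}(g(x))\bigr)$ is well-defined as long as $\delta(x)$ is chosen smaller than the injectivity radius at $f(x)$, and the rest of the argument is identical.
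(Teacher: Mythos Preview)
Your argument is correct and is the standard one. The paper itself does not supply an explicit proof of Corollary~\ref{whitneycor}; it simply states it as a consequence of the Whitney Approximation Theorem with a reference to Lee. That said, your strategy---embed $N$ in Euclidean space, take a tubular neighborhood with its smooth retraction $r$, choose $\delta$ so that straight-line segments stay in the tube, and then linearly interpolate and retract---is precisely the machinery the paper sets up in its proof of the very next result, Corollary~\ref{transversalitycor}. So your approach is entirely in keeping with the paper's methods.

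One small remark: you should be a bit careful with the metric bookkeeping. The $\delta$ you construct controls \emph{Euclidean} distance in $\R^m$, while Theorem~\ref{whitney} is phrased for an arbitrary metric $d$ defining the topology of $N$. This is harmless provided you take $d$ to be the restriction of the Euclidean metric, so that $d(f(x),g(x))\le\delta(x)$ literally means $|f(x)-g(x)|\le\delta(x)$ and the segment lies in the ball; it is worth saying so explicitly. Your closing remark about the exponential-map alternative is also fine but unnecessary here, since the paper already has Whitney embedding and the tubular neighborhood theorem available.
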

\noindent There is also a corresponding statement for transverse maps, which can be deduced from the Parametric Transversality Theorem:
\begin{cor} \label{transversalitycor}\textbf{(Transversality Homotopy Theorem)}
Let $f:M\to N$ be a smooth map between smooth manifolds and $S\subset N$ be a smooth submanifold. Then $f$ is homotopic to a smooth map $g$ which is transverse to $S$. If $f$ is already transverse to $S$ on a closed subset $A\subset M$, $g$ can be chosen such that $f|_A=g|_A$. 
\end{cor}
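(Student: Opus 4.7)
The plan is to parametrize perturbations of $f$ by a ball in a high-dimensional Euclidean space and then invoke the Parametric Transversality Theorem. Embed $N$ as a closed submanifold of some $\R^L$ via Whitney's embedding theorem, and let $\pi\colon U\to N$ denote the smooth retraction from a tubular neighborhood $U\subset\R^L$ of $N$ supplied by Theorem \ref{tubularneighborhood}. Choose a smooth positive function $\delta\colon M\to\R$ so small that the closed ball of radius $\delta(x)$ around $f(x)$ lies in $U$ for every $x\in M$, and let $B\subset\R^L$ denote the open unit ball. Consider the smooth map
\[ F\colon M\times B\to N,\qquad F(x,t)\defeq\pi\bigl(f(x)+\delta(x)t\bigr). \]
Varying $t$ alone parametrizes all of $\R^L$, and $d\pi$ maps $\R^L$ surjectively onto $T_{F(x,t)}N$ at every point of $U$, so $dF_{(x,t)}$ is surjective for every $(x,t)$; in particular $F$ is transverse to $S$.

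By the Parametric Transversality Theorem, the set of $t\in B$ for which $F_t\colon M\to N$ is transverse to $S$ has full measure in $B$ and therefore accumulates at the origin. Pick any such $t_0$ close to $0$; then $g\defeq F_{t_0}$ is smooth and transverse to $S$, and
\[ H\colon M\times[0,1]\to N,\qquad H(x,s)\defeq F(x,st_0) \]
is a smooth homotopy from $f=F_0$ to $g$, which settles the absolute statement.

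For the relative assertion I first record that transversality of $f$ is an open condition on $M$: outside $f^{-1}(S)$ it holds vacuously, while at a point $x$ with $f(x)\in S$ one may locally write $S\cap V=\phi^{-1}(0)$ for a submersion $\phi$ and observe that transversality of $f$ at $x$ is equivalent to regularity of $\phi\circ f$ at $x$, an open condition. Hence the set $W\subset M$ on which $f$ is transverse to $S$ is open and contains $A$. By a smooth Urysohn argument, choose $\psi\colon M\to[0,1]$ vanishing on an open neighborhood $V_0$ of $A$ and identically $1$ outside an open set whose closure lies in $W$, and replace the family by
\[ \tilde F(x,t)\defeq\pi\bigl(f(x)+\psi(x)^2\delta(x)t\bigr). \]
Where $\psi(x)>0$ the surjectivity argument from the first paragraph still yields transversality of $\tilde F$; where $\psi(x)=0$, both $\psi^2$ and its derivative vanish at $x$, so $\tilde F(x,t)=f(x)$ and $d\tilde F_{(x,t)}(v,w)=df_x(v)$, making transversality of $\tilde F$ at such a point reduce tautologically to transversality of $f$ at $x$, which holds since $x\in V_0\subset W$. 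Applying the Parametric Transversality Theorem as before produces $t_0$ close to $0$ for which $\tilde F_{t_0}$ is transverse to $S$, and the homotopy $H(x,s)\defeq\tilde F(x,st_0)$ satisfies $H(x,s)=f(x)$ for every $x\in A$ and $s\in[0,1]$.

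The main obstacle is exactly this last construction: on the closed set $A$ we are forbidden to perturb $f$ at all, and the free-parameter trick that made $F$ transverse in the absolute case fails on the locus $\{\psi=0\}$. The remedy is twofold—use openness of transversality to surround $A$ by an open buffer on which $f$ is already transverse to $S$, and multiply the perturbation by $\psi^2$ rather than $\psi$, so that its first-order contribution to $d\tilde F$ disappears on the zero set of $\psi$ and transversality of $\tilde F$ there becomes an automatic consequence of the assumed transversality of $f$.
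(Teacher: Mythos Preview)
Your argument is correct and follows essentially the same route as the paper: embed $N$ in Euclidean space, build the family $F(x,t)=\pi(f(x)+\delta(x)t)$ over the unit ball via the tubular-neighborhood retraction, observe that it is a submersion and hence transverse to $S$, and invoke the Parametric Transversality Theorem together with a straight-line homotopy in $B$. For the relative statement the paper simply arranges the scaling function to vanish exactly on $A$ (using that a nonnegative smooth function has vanishing differential at its zeros), while you damp by $\psi^2$ for a bump function and appeal to openness of transversality; these are equivalent standard devices, and your computation of $d\tilde F$ on $\{\psi=0\}$ is in fact more explicit than the paper's one-line treatment.
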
 
\begin{proof}
Our goal is to construct a smooth map $F:M\times B\to N$, where $B$ is the open ball in the Euclidean space, such that $F$ is transverse to $S$ and $F_0=f$. Then the Parametric Transversality Theorem implies that $F_t:M\to N$ is transverse to $S$ for some $t\in B$. Now, choosing a path $\gamma :[0,1]\to B$ from $0$ to $t$ allows us to define a homotopy $H(x,s)=F(x,\gamma(s))$ from $F_0=f$ to $F_t$, meaning that $f$ homotopic to the transverse map $F_t$.\\[1ex]
First, Whitney's Embedding Theorem (Theorem \ref{whitneyembedding}) implies that $N$ is a smooth submanifold of $\R^K$ for some $K$. Then, by the Tubular Neighborhood Theorem, there is a tubular neighborhood $U$ of $N$ in $\R^K$ and a diffeomorphism $\psi$ from $U$ to an open neighborhood of the zero section $(NN)_0=\{(y,0)\in NN | y \in N\}$ in the normal bundle $NN$ of $N$. Now let $pr:NN\to N,\text{ } (y,v)\mapsto y$ be the projection. Then $r\defeq pr\circ\psi:U\to N$ is a smooth retraction and also a submersion. Next, define the positive function
\[\delta:N\to \R\text{, } y\mapsto \text{sup}\{\varepsilon\leq 1|B_\varepsilon(y)\subset U\}.\] 
Now Whitney's Approximation Theorem implies the existence of a smooth function $\tilde{\delta}:M\to \R$ such that $0<\tilde{\delta}(x)<\delta (f(x))$ for all $x\in M$. This allows us to define
\[F:M\times B\to N\text{, } (x,t)\mapsto r(f(x)+\tilde{\delta}(x)t).\]
We observe that $F$ is well-defined because $f(x)+\tilde{\delta}(x)t\in U$ due to $|\tilde{\delta}(x)t|< \tilde{\delta}(x)<\delta (f(x))$. Moreover, $F$ is smooth as a composition of smooth maps and $F_0=f$, since $r|_N=id_N$. Finally, for each $x\in M$, $F|_{\{x\}\times B}$ is given by the composition of the local diffeomorphism $t\mapsto f(x)+ \tilde{\delta}(x)t$ and the submersion $r$, so $F$ is a submersion too and, in particular, transverse to $S$.\\[1ex]
If $f$ is already transverse to $S$ on a closed subset $A\subset M$, $\tilde{\delta}$ can be chosen such that $\tilde{\delta}(x)=0$ for $x\in A$ and $\tilde{\delta}(x)>0$ for $x\notin A$ in order to obtain that $f|_A=F_t|_A$, where $F_t$ is transverse to $S$.
\end{proof}
\noindent By combining the last two corollaries, one obtains that for any smooth submanifold $S\subset N$ every continuous map $f:M\to N$ is homotopic to a smooth map $g:M\to N$ which is transverse to $S$. If $f$ is already transverse to $S$ on a closed subset $A\subset M$, then $g$ can be chosen such that $f|_A=g|_A$. This is a powerful statement which played an important role in Thom's work on cobordism theory.\\[1ex]
Before continuing with a concrete application of the above theorems, it should be highlighted that there exist corresponding versions for smooth manifolds with boundary. As an example, we state the boundary version of the Transversality Theorem now. If $f:M\to N$ is a smooth map from a smooth manifold with boundary to a smooth manifold without boundary, denote by $\partial f: \partial M \to N$ the restriction of $f$ to the boundary of $M$.\newpage

\begin{thm}\label{transversalityboundary} \textbf{(Transversality Theorem, Boundary Version)}
Let $f:M\to N$ be a map from a smooth manifold with boundary $M$ to a smooth manifold without boundary $N$ such that $f$ and $\partial f$ are transverse to a smooth submanifold $S\subset N$, then $f^{-1}(S)$ is a smooth submanifold of $M$ with boundary $\partial f^{-1}(S) = f^{-1}(S) \cap \partial M$.
If  $f^{-1}(S) \neq \emptyset$, the codimension of $f^{-1}(S)$ in $M$ is equal to the codimension of $S$ in $N$.
\end{thm}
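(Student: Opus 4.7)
The plan is to reduce to the argument that already established the Transversality Theorem for boundaryless manifolds, adding one extra step to handle points of $\partial M$. Since being a smooth submanifold with boundary is a local property, it suffices to verify the claim in a neighborhood of each $x\in f^{-1}(S)$. For points $x$ lying in the interior of $M$, I would simply restrict $f$ to an open neighborhood $U\subset M\setminus\partial M$ of $x$. Since $U$ is a manifold without boundary and $f|_U$ is still transverse to $S$, the Transversality Theorem (Theorem \ref{trans}) immediately gives that $f^{-1}(S)\cap U$ is a smooth submanifold of $U$ of codimension equal to the codimension of $S$ in $N$, and this piece meets $\partial M$ in the empty set, consistent with the claimed boundary formula.

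For a boundary point $x\in\partial M\cap f^{-1}(S)$, I would reuse the same local reduction that was used to derive Theorem \ref{trans}. Let $n$ be the codimension of $S$ in $N$, choose a neighborhood $V\subset N$ of $y\defeq f(x)$ and a submersion $\phi:V\to\R^n$ with $S\cap V=\phi^{-1}(0)$, and shrink to a neighborhood $U\subset M$ of $x$ with $f(U)\subset V$. Then $f^{-1}(S)\cap U=(\phi\circ f)^{-1}(0)$. The chain-rule argument already carried out in the paper turns transversality of $f$ to $S$ at $x$ into the statement that $0$ is a regular value of $\phi\circ f$, and transversality of $\partial f$ to $S$ at $x$ into the statement that $0$ is a regular value of $(\phi\circ f)|_{\partial M\cap U}$.

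At this stage I would invoke a boundary version of the Preimage Theorem: if a smooth map $g:M\to P$ from a manifold with boundary to a boundaryless manifold has $p\in P$ as a regular value of both $g$ and $g|_{\partial M}$, then $g^{-1}(p)$ is a smooth submanifold of $M$ with boundary, of codimension $\dim P$, whose boundary is exactly $g^{-1}(p)\cap\partial M$. This is proved by an implicit function theorem argument in an adapted half-space chart, paralleling the proof of the Preimage Theorem itself. Applying it to $g=\phi\circ f$ yields the desired local model of $f^{-1}(S)\cap U$ near $x$, and gluing the interior and boundary local pieces delivers the global statement together with the codimension claim.

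The main obstacle I expect is precisely this boundary Preimage Theorem: one has to verify, in a half-space chart around $x$, that the two regular value conditions together allow one to simultaneously straighten $(\phi\circ f)^{-1}(0)$ and the boundary $\partial M$, so that the preimage becomes a manifold with boundary of codimension $n$ without producing corners. Once that local half-space implicit function theorem computation is in hand, patching it with the interior case and reading off the boundary identity is routine.
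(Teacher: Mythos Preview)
Your proposal is correct and matches what the paper intends: the paper does not actually give a proof of this theorem, stating only that ``the proof is similar to the proof for the case without boundary, so we omit it here.'' Your outline---treating interior points via Theorem~\ref{trans} directly and boundary points by the same $\phi\circ f$ reduction followed by a boundary Preimage Theorem in a half-space chart---is precisely the natural extension the paper is alluding to, and the obstacle you flag (simultaneously straightening the preimage and $\partial M$ without corners) is indeed the only nontrivial point to check.
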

\noindent We will see an application of the above theorem in the next part. The proof is similar to the proof for the case without boundary, so we omit it here. There are another two useful statements (\cite{Le12}, Ch. 6 and \cite{Gu74}, Ch. 2.3) that can be deduced from the boundary versions of Corollaries \ref{whitneycor} and \ref{transversalitycor} respectively. Together they yield the existence of transverse homotopies between two transverse maps that are homotopic:

\begin{thm}\label{homsmooth}
If $f,g:M\to N$ are homotopic smooth maps between smooth manifolds, then they are smoothly homotopic. If $f$ and $g$ are homotopic relative to some closed subset $A\subset M$, then they are smoothly homotopic relative to $A$.
\end{thm}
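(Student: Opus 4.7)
The plan is to apply the boundary version of Corollary \ref{whitneycor} (the homotopy form of the Whitney Approximation Theorem) to a continuous homotopy between $f$ and $g$. Let $H: M \times [0, 1] \to N$ be a continuous homotopy from $f$ to $g$; then $M \times [0, 1]$ is a smooth manifold with boundary $M \times \{0, 1\}$, on which $H$ restricts to the smooth maps $f$ and $g$. Producing a smooth $\tilde H: M \times [0, 1] \to N$ that agrees with $H$ on $M \times \{0, 1\}$ gives the desired smooth homotopy.

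Before invoking the boundary corollary, I would reparametrize the time variable to make $H$ smooth on an open neighborhood of the boundary. Pick a continuous function $\phi: [0,1]\to [0,1]$ with $\phi\equiv 0$ on $[0, \tfrac13]$ and $\phi\equiv 1$ on $[\tfrac23, 1]$, and set $H'(x, t) \defeq H(x, \phi(t))$. Then $H' = f$ on $M \times [0, \tfrac13]$ and $H' = g$ on $M \times [\tfrac23, 1]$, so $H'$ is smooth on the open neighborhood $M \times [0, \tfrac13) \cup M \times (\tfrac23, 1]$ of the closed subset $M \times \{0, 1\}$. Applying the boundary version of Corollary \ref{whitneycor} to $H'$ relative to this closed subset delivers a smooth $\tilde H : M \times [0, 1] \to N$ with $\tilde H(x, 0) = f(x)$ and $\tilde H(x, 1) = g(x)$, that is, a smooth homotopy from $f$ to $g$.

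For the relative statement, assume in addition $H(a, t) = f(a) = g(a)$ for all $a \in A$ and $t \in [0, 1]$. The same reparametrization keeps $H'$ constant in $t$ along $A \times [0, 1]$. I would then invoke the boundary version of Corollary \ref{whitneycor} with the enlarged closed subset $C \defeq (M \times \{0, 1\}) \cup (A \times [0, 1])$; this requires exhibiting an open neighborhood of $C$ in $M \times [0,1]$ on which $H'|_C$ admits a smooth extension. Near $M \times \{0\}$ and $M \times \{1\}$ the reparametrization already provides such extensions ($H' = f$ and $H' = g$), and along the middle portion $A \times [\tfrac13, \tfrac23]$ the globally smooth, constant-in-$t$ map $(x, t) \mapsto f(x)$ extends $H'|_{A \times [0, 1]}$.

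The principal obstacle is gluing these local extensions into a single smooth map on one open neighborhood of $C$, because $(x, t) \mapsto g(x)$ near $t = 1$ and $(x, t) \mapsto f(x)$ near $A$ only coincide on $A$ itself. The remedy is to embed $N$ into some $\R^K$ by Whitney's Embedding Theorem, fix the tubular-neighborhood retraction $r: U \to N$ of $N$ in $\R^K$ (as used in the proof of Corollary \ref{transversalitycor}), and, on a sufficiently small neighborhood of $A$ in $M$ where the straight segment from $f(x)$ to $g(x)$ lies in $U$, interpolate linearly in $\R^K$ between $f(x)$ and $g(x)$ and compose with $r$ to land back in $N$. On $A$ this interpolation is constant since $f|_A = g|_A$, so the resulting map agrees with $H'$ on $C$ and is smooth on a neighborhood of $C$. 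Applying the boundary version of Corollary \ref{whitneycor} with this smooth extension then yields a smooth homotopy that is relative to $A$.
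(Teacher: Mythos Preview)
Your argument is correct and follows precisely the route the paper indicates (but does not spell out): the paper merely states that Theorem~\ref{homsmooth} ``can be deduced from the boundary version of Corollary~\ref{whitneycor}'' and cites \cite{Le12}, Ch.~6, without giving details. Your reparametrization to make the homotopy constant near $t=0,1$, followed by an application of the boundary Whitney approximation relative to $M\times\{0,1\}$ (and, in the relative case, to $(M\times\{0,1\})\cup(A\times[0,1])$ after building a smooth extension via the tubular retraction in $\R^K$), is exactly the standard argument from Lee that the paper is pointing to.
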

\begin{thm}\label{homtrans}
Let $f:M\to N$ be a smooth map from a smooth manifold with boundary to a smooth manifold. Let $A\subset M$ be a closed subset, $S\subset N$ be a closed smooth submanifold. Let $f$ be transverse to $S$ on $A$ and $\partial f:\partial M\to N$ be transverse to $S$ on $A\cap \partial M$. Then there exists a smooth map $g:M\to N$ such that $g$ and $\partial g$ are transverse to $S$ and $f|_A=g|_A$. 
\end{thm}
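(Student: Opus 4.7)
The plan is to adapt the proof of Corollary \ref{transversalitycor} to the boundary setting while additionally enforcing the equality on $A$. The goal is to construct a smooth family $F: M \times B \to N$, where $B$ is the open unit ball in some $\R^K$, such that $F_0 = f$, the map $F$ is constant in $t$ on $A \times B$, and both $F$ and $\partial F = F|_{\partial M \times B}$ are transverse to $S$. Given such an $F$, applying the Parametric Transversality Theorem twice — once to the restriction of $F$ to $\mathrm{int}(M) \times B$ and once to $\partial F: \partial M \times B \to N$ (both maps from boundaryless manifolds) — yields two full-measure subsets of $B$; picking a parameter $t$ in their intersection and setting $g \defeq F_t$ gives a smooth map with $g|_A = f|_A$, with $g$ transverse to $S$ on $\mathrm{int}(M)$, and with $\partial g$ transverse to $S$ on $\partial M$. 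Transversality of $g$ at points $x \in \partial M$ then follows automatically from the inclusion $T_x \partial M \subset T_x M$, since
\[T_{g(x)} S + dg_x(T_x M) \supseteq T_{g(x)} S + d(\partial g)_x(T_x \partial M) = T_{g(x)} N.\]

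For the construction of $F$, I would follow exactly the recipe of Corollary \ref{transversalitycor}: embed $N \subset \R^K$ via Whitney's Embedding Theorem, take a tubular neighborhood $U \subset \R^K$ of $N$ with smooth retraction $r: U \to N$ (which is a submersion), and set
\[F(x,t) \defeq r\bigl(f(x) + \tilde\delta(x)\,t\bigr),\]
where $\tilde\delta: M \to [0,\infty)$ is a smooth function with $\tilde\delta^{-1}(0) = A$ and $\tilde\delta(x) < \delta(f(x))$ for the tubular radius $\delta$ defined as in Corollary \ref{transversalitycor}. The existence of such a $\tilde\delta$ is standard because $A$ is closed: a smooth nonnegative function vanishing precisely on $A$ can be produced via a partition of unity subordinate to a locally finite cover of $M \setminus A$, and then scaled (using Whitney's Approximation Theorem) to sit underneath $\delta \circ f$. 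By construction, $F_0 = f$ and $F$ depends only on $x$ when $x \in A$, so the required equality $g|_A = f|_A$ will be automatic.

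To verify the transversality of $F$ and $\partial F$, split into two regions. On $(M \setminus A) \times B$ the function $\tilde\delta$ is positive, so the map $t \mapsto r(f(x) + \tilde\delta(x) t)$ is the composition of a local diffeomorphism with the submersion $r$; hence $F$ is a submersion there, and both $F$ and $\partial F$ are trivially transverse to $S$. On $A \times B$ the map $F$ coincides with $f$ (independent of $t$), which by hypothesis is transverse to $S$ on $A$; the same applies to $\partial F$ on $(A \cap \partial M) \times B$ by the hypothesis on $\partial f$. Adjoining the $t$-direction cannot destroy these transversality conditions, so $F$ and $\partial F$ are transverse to $S$ on all of $M \times B$ and $\partial M \times B$ respectively. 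The principal technical point is then the simultaneous parametric transversality described above, which is immediate once one notes that the intersection of two full-measure subsets of $B$ is of full measure and hence nonempty; the rest of the proof is bookkeeping identical to that of Corollary \ref{transversalitycor}.
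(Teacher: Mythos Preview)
The paper does not actually prove this theorem: it merely states it with a reference to \cite{Gu74}, Ch.~2.3, and remarks that it ``can be deduced from the boundary versions of Corollaries~\ref{whitneycor} and~\ref{transversalitycor}.'' Your argument is precisely the natural execution of that deduction, following the same template as the paper's proof of Corollary~\ref{transversalitycor} (including the $\tilde\delta$-trick already indicated in the last paragraph of that proof). So there is no divergence of approach to discuss; your proof simply fills in what the paper left to the reader.

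One point deserves more care. Your justification that $F$ is transverse to $S$ on $A\times B$ reads: ``the map $F$ coincides with $f$ (independent of $t$)\,\dots\,adjoining the $t$-direction cannot destroy these transversality conditions.'' This is not quite the right reason. Since $A$ is only closed, the differential $dF_{(x,t)}$ is not determined by $F|_{A\times B}$; one computes
\[
dF_{(x,t)}(v,w)=dr_{f(x)}\bigl(df_x(v)+d\tilde\delta_x(v)\cdot t\bigr),
\]
and the extra term $d\tilde\delta_x(v)\cdot t$ could in principle shrink the image. What saves you is that $\tilde\delta\geq 0$ attains its minimum on $A$: for $x\in A\cap\mathrm{int}(M)$ this forces $d\tilde\delta_x=0$, and for $x\in A\cap\partial M$ it forces $d\tilde\delta_x|_{T_x\partial M}=0$. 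Hence $dF_{(x,t)}(T_xM\times 0)\supseteq df_x(T_xM)$ in the first case and $d(\partial F)_{(x,t)}(T_x\partial M\times 0)\supseteq d(\partial f)_x(T_x\partial M)$ in the second, and the hypothesized transversality of $f$ and $\partial f$ transfers to $F|_{\mathrm{int}(M)\times B}$ and $\partial F$ as you need. (At points of $(A\cap\partial M)\times B$ the inward normal derivative of $\tilde\delta$ may be nonzero, so your stronger claim that $F$ itself is transverse on all of $M\times B$ need not hold --- but you never use it, since you apply the Parametric Transversality Theorem only to $F|_{\mathrm{int}(M)\times B}$ and to $\partial F$.) With this clarification the argument is complete.
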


\noindent Although a variety of further transversality theorems could be stated here, we now continue with a concrete example of the application of transversality. 

\section{An application of transversality}

In this part we mainly focus on proving the one-to-one correspondence bet\-ween homotopy classes of maps from the ($n$+$k$)-sphere to the $n$-sphere and framed cobordism classes of framed $k$-submanifolds of $\R^{n+k}$ described in the first part. This means we apply Thom's concept of transversality to the earlier motivating special case of cobordism, namely framed cobordism, in order to prove a central isomorphism theorem (Theorem \ref{mthm}).\\
The original source for Pontryagin's proof is his work \cite{Po55}. We also use Chapter 7 of John Milnor's little book \cite{Mi65} where one can find a straightforward proof of the one-to-one correspondence we want to show. However, both authors do not work with transversality methods, but with regular values in their proofs. In Pontryagin's case this is obvious, as transversality was not established at that point in time. Since we are interested in applying the above transversality theorems, we give a detailed proof based on a proof sketch by Andrew Putman \cite{Pu15}, who uses transversality in his work.

\noindent Before dedicating our attention to the proof of the isomorphism theorem we need the following preparatory product neighborhood theorem for framed manifolds proved by Milnor \cite{Mi65}:
\begin{thm}\label{prod}
Let ($M,\nu)$ be a $k$-dimensional framed \smf of  $\R^{n+k}$. Then there is an open neighborhood $U$ of $M$ in $\R^{n+k}$ which is diffeomorphic to $M\times\R^n$ such that every $x \in M$ is mapped to $(x,0)\in M\times\R^n$ and each normal frame $\nu(x)$ corresponds to the standard basis of $\R^n$. 
\end{thm}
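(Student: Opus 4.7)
The plan is to combine the framing $\nu$ with the Tubular Neighborhood Theorem, and then rescale the fiber direction of the normal bundle to convert a tubular neighborhood into a full product $M\times\R^n$.

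First I would observe that the framing $\nu$ gives a global smooth vector bundle isomorphism
\[\Phi:M\times\R^n\to NM,\qquad (x,t_1,\dots,t_n)\mapsto\bigl(x,\textstyle\sum_{i=1}^n t_i\,\nu_i(x)\bigr),\]
which carries $M\times\{0\}$ onto the zero section and sends the standard basis in the fiber over $x$ to the frame $\nu(x)$. Composing with the map $\gamma:NM\to\R^{n+k}$ of the Tubular Neighborhood Theorem yields
\[\psi\defeq\gamma\circ\Phi:M\times\R^n\to\R^{n+k},\qquad (x,t)\mapsto x+\textstyle\sum_i t_i\,\nu_i(x),\]
and since $\Phi$ is a global diffeomorphism, the Tubular Neighborhood Theorem implies that $\psi$ restricts to a diffeomorphism from some open neighborhood $V\subset M\times\R^n$ of $M\times\{0\}$ onto a tubular neighborhood of $M$ in $\R^{n+k}$.

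Next I would pass from $V$ to a product slab using compactness of $M$. A standard tube lemma argument produces $\varepsilon>0$ with $M\times B_\varepsilon(0)\subset V$, so setting $U\defeq\psi(M\times B_\varepsilon(0))$, the restriction $\psi|_{M\times B_\varepsilon(0)}:M\times B_\varepsilon(0)\to U$ is a diffeomorphism. Then I would choose a smooth diffeomorphism $\alpha:\R^n\to B_\varepsilon(0)$ that equals the identity on some open ball around the origin (easily built by radial rescaling with a suitable smooth function) and define
\[\Psi:M\times\R^n\to U,\qquad (x,t)\mapsto\psi(x,\alpha(t)).\]
By construction $\Psi$ is a diffeomorphism with $\Psi(x,0)=x$; and because $\alpha$ is the identity near $0$, the differential computation $d\Psi_{(x,0)}(0,e_i)=d\psi_{(x,0)}(0,e_i)=\nu_i(x)$ shows that the standard basis corresponds to the framing $\nu(x)$, as required.

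Almost everything in this plan is an immediate consequence of the framing hypothesis, the Tubular Neighborhood Theorem, and the chain rule. The only nontrivial step is the compactness passage from the abstract neighborhood $V$ of the zero section to a uniform product slab $M\times B_\varepsilon(0)$, but this is routine since $M$ is closed (hence compact). No genuine obstacle arises; the real content of the theorem is that the framing lets the normal bundle be trivialized \emph{compatibly} with the Euclidean ambient structure, which is exactly what $\Phi$ achieves.
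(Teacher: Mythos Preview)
Your proof is correct and follows essentially the same route as the paper: both build the identical map $\psi(x,t)=x+\sum_i t_i\nu_i(x)$, establish that it is a diffeomorphism on a slab $M\times B_\varepsilon(0)$, and then rescale the fiber by a diffeomorphism $\R^n\cong B_\varepsilon(0)$ whose differential at the origin is the identity. The only cosmetic difference is that you invoke the Tubular Neighborhood Theorem (already stated as Theorem~\ref{tubularneighborhood}) together with the tube lemma to obtain the slab, whereas the paper re-derives this directly via the inverse function theorem and a sequential compactness argument; your packaging is slightly more efficient but not a genuinely different idea.
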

\begin{proof}
Let us regard the map \[\psi:M\times\R^n\rightarrow\R^{n+k}\text{, \enspace} (x,\eta_1,\dots,\eta_n)\mapsto (x,0)+\eta_1\nu_1(x)+\dots+\eta_n\nu_n(x)  \]
Since $d\psi_{(x,0)}$ is invertible, the inverse function theorem implies that $\psi$ maps an open neighborhood of $(x,0)$ in $M\times\R^n$ diffeomorphically onto an open neighborhood of $x$ in $\R^{n+k}$. Now, in order to generate a contradiction, we assume that $\psi$ is not bijective on the $\varepsilon$-neighborhood $M\times U_{\varepsilon}$ of $M\times\{0\}$ for any $\varepsilon >0$. Then, we find pairs $(x,\eta)\neq(x',\eta')$ in $M\times\R^n$ such that $\|\eta\|,\|\eta'\|>0$ can be chosen arbitrary small and $\psi(x,\eta)=\psi(x',\eta')$.\\
This allows us to find sequences of such pairs with $\eta,\eta'\rightarrow0$. The compactness of $M\times\overline{U_\varepsilon}$ implies that there are convergent subsequences of pairs $(x,\eta)$, $(x',\eta')$ with limits $(x_0,0)$, $(x'_0,0)$. Since they are mapped to the same point under $\psi$, we have that $x_0=x'_0$. This yields a contradiction because $\psi$ would not be one-to-one on any open neighborhood of $(x_0,0)$ then. Thus,
\[\psi:M\times U_{\varepsilon}\rightarrow \psi(M\times U_\varepsilon) \subset\R^{n+k}\] is a diffeomorphism. Notice that \[\varphi: U_{\varepsilon}\to \R^n,\, \eta \mapsto \frac{\eta}{1-\|\eta\|^2\slash \varepsilon^2}\] is a diffeomorphism from $U_{\varepsilon}$ to $\R^n$ with $d\varphi_0=id$, therefore each normal frame $\nu(x)$ still corresponds to the standard basis of $\R^n$. 
\end{proof}
\begin{rem}
Theorem \ref{prod} holds for framed cobordisms $X\subset \R^{n+k}\times[0,1]$ too, i.e. there is an open neighborhood $W\subset \R^{n+k}\times[0,1]$ of $X$ which is diffeomorphic to $X\times \R^n$. The given collars of $X$ allow here for an analogous proof.
\end{rem}
\noindent Let us recall the theorem we would like to prove:
\begin{thm*} \textbf{(Pontryagin)} For all natural numbers $0\leq k <n$ there is an isomorphism \[\pi_{n+k}(S^n)\cong \Omega ^{fr}_k(\mathbb{R}^{n+k}).\]
\end{thm*}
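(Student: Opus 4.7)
The plan is to construct mutually inverse group homomorphisms $P: \pi_{n+k}(S^n) \to \Omega^{fr}_k(\R^{n+k})$ and $T: \Omega^{fr}_k(\R^{n+k}) \to \pi_{n+k}(S^n)$. To build $P[f]$, I would first fix once and for all a regular value $y \in S^n$ together with an ordered basis of $T_y S^n$; given a homotopy class represented by a continuous $f$, the Whitney Approximation Theorem (Theorem \ref{whitney}) together with the Transversality Homotopy Theorem (Corollary \ref{transversalitycor}) allow me to replace $f$ by a smooth map transverse to $y$. The set $M := f^{-1}(y)$ is then a closed smooth $k$-submanifold of $S^{n+k}$, and after deleting a single point from $S^{n+k} \setminus M$ and applying stereographic projection, it becomes a submanifold of $\R^{n+k}$. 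Since transversality yields $T_x M = \ker df_x$, the differential $df_x$ restricts to an isomorphism $N_x M \xrightarrow{\sim} T_y S^n$, and pulling the fixed basis back gives a framing $\nu$ of $M$; I set $P[f] := [M, \nu]$.

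For the inverse $T[M, \nu]$, I would invoke the Product Neighborhood Theorem \ref{prod} to obtain a diffeomorphism $\Phi: U \xrightarrow{\sim} M \times \R^n$ of an open neighborhood $U \subset \R^{n+k}$ of $M$ carrying $\nu$ to the standard basis of $\R^n$. Fixing a diffeomorphism $\varphi: \R^n \xrightarrow{\sim} S^n \setminus \{s\}$ with $\varphi(0) = y$ and $|\varphi(t)| \to s$ as $|t| \to \infty$, I define $F := \varphi \circ \mathrm{pr}_{\R^n} \circ \Phi$ on $U$ and $F \equiv s$ elsewhere; this extends smoothly to $F: S^{n+k} \to S^n$ via the one-point compactification $\R^{n+k} \cup \{\infty\} \cong S^{n+k}$, sending $\infty \mapsto s$, and I set $T[M, \nu] := [F]$.

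The key verifications are then as follows. (i) \emph{Well-definedness of $P$}: independence of the representative uses Theorem \ref{homsmooth} to smooth a homotopy $H: S^{n+k} \times I \to S^n$ and Theorem \ref{homtrans} to make it transverse to $\{y\}$ relative to its boundary, so $H^{-1}(y)$ is a framed cobordism (after translation into $\R^{n+k} \times I$); independence of the regular value and the basis at $y$ uses the same machinery along a smooth path in $S^n$, invoking the boundary Transversality Theorem \ref{transversalityboundary}. (ii) \emph{Well-definedness of $T$}: two product neighborhoods give homotopic maps via a straight-line homotopy in the fiber $\R^n$ (staying inside $S^n \setminus \{s\}$), and a framed cobordism $X$ produces a homotopy through the cobordism version of Theorem \ref{prod} mentioned in the remark following its proof. (iii) $P \circ T = \mathrm{id}$: immediate from construction, as $F^{-1}(y) = M$ with framing $\nu$. (iv) $T \circ P = \mathrm{id}$: given $f$ smooth and transverse to $y$ with $M = f^{-1}(y)$ and framing $\nu$, homotope $f$ to the rigid model $T(M, \nu)$ by first shrinking to a small product neighborhood $U' \subset U$ on which $f$ avoids $s$, using a straight-line homotopy in the contractible set $S^n \setminus \{s\}$ to deform $f|_{U'}$ to $F|_{U'}$, and simultaneously deforming $f$ off $U'$ through the contractible set $S^n \setminus \{y\}$ to the constant $s$, matching along the overlap. (v) \emph{Group homomorphism}: the sum in $\pi_{n+k}(S^n)$ can be realized by placing two representatives in disjoint hemispheres of $S^{n+k}$, corresponding under $T$ to the disjoint union of the underlying framed manifolds; the dimension hypothesis $k < n$ provides the room needed to perform this operation.

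The main obstacle is step (iv), the \emph{standardization} of $f$ to the rigid piecewise-defined model $T(M, \nu)$. The two maps agree on $M$ and have matching first-order behavior transverse to $M$, but off $M$ their behavior is a priori unrelated; the homotopy must linearly interpolate them near $M$ while keeping the image away from $s$, simultaneously retract $f$ on the complement of the product neighborhood through maps avoiding $y$ down to the constant $s$, and glue these partial homotopies consistently on the overlap. This is precisely the geometric step which, in Pontryagin's original regular-value approach, required delicate analysis, and which becomes substantially cleaner by repeated appeal to the transversality and approximation machinery developed in the second part of the paper.
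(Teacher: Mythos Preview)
Your plan matches the paper's approach, reorganized as mutual inverses $P,T$ rather than a single bijective homomorphism $\Phi$. The paper's Steps 1--5 are your construction of $P$ and its well-definedness and homomorphism properties; Step 6 (surjectivity via the Pontryagin--Thom collapse) is your $T$ together with $P\circ T=\mathrm{id}$; and the core of Step 7 is precisely your step (iv), where the paper shows that any smooth transverse $f_0$ is homotopic to the collapse map $f_2$ built from its own Pontryagin manifold.

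Your description of (iv), however, skips the one genuinely hard detail. Running a straight-line homotopy from $f|_{U'}$ to $F|_{U'}$ inside $S^n\setminus\{s\}\cong\R^n$, and separately retracting $f$ to the constant $s$ through $S^n\setminus\{y\}$ off $U'$, yields two partial homotopies with no reason to agree on any overlap: the first may pass through $y$ near $\partial U'$, while the second avoids $y$ throughout, so ``matching along the overlap'' is not automatic. The paper handles this in two stages. First, a cutoff-weighted interpolation produces an intermediate $f_1$ agreeing with $f_2$ on a small $V\subset U$ and with $f_0$ outside $U$. Second, a first-order Taylor estimate shows that for $0<\|\eta\|$ small, both $g_0(x,\eta)$ and $g_2(x,\eta)=\eta$ satisfy $\langle g_i(x,\eta),\eta\rangle>0$, hence lie in a common open half-space of $\R^n$; their convex combination $g_1(x,\eta)$ therefore never hits $0$, so $f_1$ misses $y$ on $U\setminus V$. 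Only after this can $f_1$ and $f_2$ be linearly homotoped on all of $S^{n+k}\setminus V$ through maps avoiding $y$. This half-space estimate is the analytic substance your outline is missing. (A minor side remark: the hypothesis $k<n$ plays no role in either your sketch or the paper's proof; the disjoint-union addition works regardless.)
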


\begin{proof}
We will prove the theorem in 7 steps. In Step 1, we will construct a map $\Phi: \pi_{n+k}(S^n) \rightarrow \Omega ^{fr}_k(\R^{n+k})$, which is well-defined as Steps 2, 3 and 4 will show. Then, Step 5 will yield that $\Phi$ is a homomorphism. Finally, we will obtain the surjectivity and the injectivity of $\Phi$ in Step 6 and 7.\\ 
Using the one-point compactification we will identify $S^m$ with $\mathbb{R}^m\cup \{\infty\}$ in the proof.
\\[1ex]
\textbf{Step 1:} Constructing a map $\Phi: \pi_{n+k}(S^n) \rightarrow \Omega ^{fr}_k(\mathbb{R}^{n+k})$ \\[1ex]
Fix an arbitrary point $y\in \R^n \subset S^n$ and choose a positively oriented basis $\nu=(\nu_1, \dots, \nu_n)$ for the tangent space $T_yS^n$ with the orientation induced by the standard orientation on $\R^{n+1}$. 
 Now let $c\in \pi_{n+k}(S^n)$ be an arbitrary homotopy class. Let $f:\map$ be a pointed representative of $c$ with base points $\infty \in S^{n+k}$ and $\infty \in S^{n}$, i.e. $ f(\infty) = \infty$. 
Using Corollary \ref{whitneycor}, we can assume that $f$ is smooth. Since $f$ is transverse to $y$ in ${\infty}$ trivially, we can suppose that $f$ is transverse to $y$ by Corollary \ref{transversalitycor} meaning that $y$ is a regular value of $f$.\\
The regular value theorem implies that $f^{-1}(y)$ is a $k$-dimensional submani\-fold of $\R^{n+k}$ because $ f(\infty) = \infty$. Furthermore, $ker(df_x)=T_xf^{-1}(y)$ and thus, $df_x$ maps $N_xf^{-1}(y)$ isomorphically onto $T_yS^n$ for every $x\in f^{-1}(y)$. Therefore, we find a unique vector $\omega_i(x) \in N_xf^{-1}(y)\subset T_xS^{n+k}$ that maps to $\nu_i$ under $df_x$ for every $i \in\{1,\dots,n\}$ and every $x\in f^{-1}(y)$. We obtain a pullback framing $(\omega_1, \dots, \omega_n)$ of $f^{-1}(y)$ which we will denote $f^{\ast}\nu\vcentcolon=\omega$. Now we can define \[\Phi(c)\vcentcolon=[(f^{-1}(y),f^{\ast}\nu)] \in \Omega ^{fr}_k(\R^{n+k}).\]
\begin{rem}
If $y\notin im(f)$, $f$ is not surjective and hence null-homotopic. This implies that $c=0 \in \pi_{n+k}(S^n)$, so we need to have $\Phi(c)\vcentcolon=0\in \Omega ^{fr}_k(\R^{n+k})$ in order to make $\Phi$ a homomorphism. However, this matches with our definition because $f^{-1}(y) = \emptyset$ and, moreover, the pullback framing constructed above corresponds to the empty framing $\sigma$, as there is no $x\in f^{-1}(y)$.
\end{rem}
\noindent The framed manifold $(f^{-1}(y),f^{\ast}\nu)$ is called a Pontryagin mani\-fold. In the next steps we need to show that $\Phi$ does not depend on the different choices of the map $f:\map$, the positively oriented basis $\nu$ of $T_yS^n$ and the point $y\in \R^n\subset S^n$ we made during its construction.
\\[1ex]
\textbf{Step 2:} The map $\Phi$ does not depend on the choice of $f$. \\[1ex]
Choose a point $y\in \R^n$ and a positively oriented basis $\nu$ of $T_yS^n$. Now let $f_0, f_1:\map$ be two smooth pointed maps such that they represent the same homotopy class and both are transverse to $y$. Thus, there is a homotopy $F: S^{n+k}\times[0,1]\rightarrow S^n$ from $F_0=f_0$ to $F_1=f_1$ so that $F(\infty,t)=\infty$ for $t\in[0,1]$. Using Theorem \ref{homsmooth}, it can be assumed that $F$ is smooth. We observe that $F$ is transverse to $y$ on the closed set $S^{n+k}\times\{0,1\}\cup\{\infty\}\times[0,1]=\vcentcolon A$ and $F|_{\partial (S^{n+k}\times[0,1])}$ is transverse to $y$ by assumption. By Theorem \ref{homtrans}, there exists a smooth map $G: S^{n+k}\times[0,1]\rightarrow S^n$ such that $G$ is transverse to $y$ and $G|_A=F|_A$. This means that $G$ is a smooth homotopy from $f_0$ to $f_1$ which is transverse to $y$ and satisfies $G(\infty,t)=\infty$ for $t\in[0,1]$. 
We can assume that $G_t=f_0$ and $G_{1-t}=f_1$ for $t\in [0,\varepsilon)$, $\varepsilon >0$.
Now the preimage $G^{-1}(y)\subset\R^{n+k}\times[0,1]$ together with the framing $G^{\ast}\nu$ is a framed cobordism from $f_0^{-1}(y)\subset\R^{n+k}\times\{0\}$ to $f_1^{-1}(y)\subset\R^{n+k}\times\{1\}$, i.e. $\Phi([f_0])=\Phi([f_1])$.
\begin{rem}
Here we used Theorem \ref{transversalityboundary}, i.e. the boundary version of Theorem \ref{trans}, which yields that $G^{-1}(y)$ is a ($k$+1)-dimensional \smf of $\R^{n+k}\times[0,1]$ with boun\-dary $\partial (G^{-1}(y))=G^{-1}(y)\cap \partial M$. This allows us to construct a Pontryagin manifold from $G$ as well, where the framing $G^{\ast}\nu$ of $G^{-1}(y)$ restricted to the boundary coincides with the framings  $f_0^\ast\nu$  of $f_0^{-1}(y)$ and $f_1^\ast\nu$ of $f_1^{-1}(y)$ respectively.
\end{rem}
\noindent \textbf{Step 3:} The map $\Phi$ does not depend on the choice of $\nu$. \\[1ex]
 We can identify the space of all positively oriented bases of $T_yS^n$ with the space $GL^{+}(n,\R)$ of $(n\times n)$-matrices with positive determinant. This is because for every positively oriented base $\upsilon$ there is exactly one transformation matrix $T$ in $GL^{+}(n,\R)$ such that one obtains $\upsilon$ by applying $T$ to a fixed positively oriented base. Now let $\nu=(\nu_1, \dots, \nu_n)$, $\omega=(\omega_1, \dots, \omega_n)$ be two positively oriented bases of $T_yS^n$ for a fixed $y\in \R^n$. Since $GL^{+}(n,\R)$ is path-connected, we can find a smooth path $s:[0,1]\rightarrow GL^{+}(n,\R)$ from $\nu$ to $\omega$. 
We can assume that $s(t)=\nu$ and $s(1-t)=\omega$ for $t\in [0,\varepsilon)$, $\varepsilon >0$. 
Then $f^{-1}(y)\times [0,1]$ together with the framing $\upsilon_i(x,t)=((f^{\ast}(s(t)))_i(x),0)$ is a framed cobordism from $(f^{-1}(y),f^{\ast}\nu)=(f^{-1}(y), f^{\ast}(s(0)))$ to $(f^{-1}(y), f^{\ast}(s(1)))=(f^{-1}(y),f^{\ast}\omega)$. 
\begin{rem}
We will sometimes just write $f^{-1}(y)$ instead of $(f^{-1}(y),f^{\ast}\nu)$ because $[(f^{-1}(y),f^{\ast}\nu)]=[(f^{-1}(y),f^{\ast}\omega)]$ for arbitrary positively oriented bases $\nu, \omega$ of $T_yS^n$.
\end{rem} 
\noindent \textbf{Step 4:} The map $\Phi$ does not depend on the choice of $y$. \\[1ex]
Let $y,z \in \R^n, y\ne z$, be two arbitrary points and $f:\map$ be pointed and transverse to $y$. Our aim is to construct a smooth translation map $\tau: \R^n\rightarrow \R^n$ such that
\begin{enumerate}
\item $\tau(y)=z$;
\item $\tau$ is homotopic to $id_{\R^n}$ through a homotopy which fixes $id_{\R^n}$ on the complement of a compact set, hence $\tau$ differs from $id_{\R^n}$ only on a compact set;
\item the extension $\tau^{\ast}:S^n\rightarrow S^n$ of $\tau$ is smooth and satisfies that $d\tau^\ast_y$ is an orientation-preserving isomorphism;
\item $\tau^{\ast}\circ f$ is transverse to $z$ and $(\tau^{\ast}\circ f)^{-1}(z)=f^{-1}(y)$.
\end{enumerate}
In that case $f$ and $\tau^{\ast}\circ f$ are homotopic maps because the homotopy $\tau \simeq id_{\R^n}$ can be extended to a homotopy $\tau^\ast \simeq id_{S^n}$. They both fix $\infty$ where $f$ is transverse to $y$ and $\tau^{\ast}\circ f$ is transverse to $z$. This means that for a given homotopy class $c\in \pi_{n+k}(S^n)$ in Step 1 $f$ can be chosen to construct $\Phi(c)$ via $y$ and $\tau^{\ast}\circ f$ can be chosen to construct $\Phi(c)$ via $z$. Now Step 3 allows us to choose positively oriented bases $\nu$ of $T_yS^n$ and $\omega$ of $T_zS^n$ such that $d\tau^{\ast}_y$ maps $\nu$ to $\omega$. Then the properties imply that the Pontryagin manifolds obtained by $f$ and $\tau^{\ast}\circ f$ coincide. Thus and because of Step 2, $\Phi$ is independent of the choice of the point $y\in \R^n$ and therefore well-defined. \\[1ex]
Let us construct $\tau$ now: \\[1ex]
Let $\|\text{ }\|$ denote the Euclidean  norm. Define $c\vcentcolon=2max\{\|y\|,\|z\|\}>0$, which implies $y,z\in \overline{B_{\frac{c}{2}}(0)}\subset \R^n$. Consider the smooth cutoff function \[ h:\R \rightarrow \R\text{, } h(r)\vcentcolon = \begin{cases}
1 & \text{for $r\leq 0$}\\
\scalebox{1.25}{$\frac{e^{-\frac{1}{1-r}}}{e^{-\frac{1}{1-r}}+e^{-\frac{1}{r}}}$} & \text{for $0<r<1$}\\
0  & \text{for $r\geq 1$}
\end{cases} 
\]
declining from $1$ to $0$.
Now define the map 
\[\tau:\R^n\rightarrow \R^n \text{, } \tau(x) \vcentcolon=x+h(\frac{\|x\|-2c}{4c})\cdot(z-y),\]
which is a translation of $x$ by $(z-y)$ multiplied with a real number between $0$ and $1$ depending on the distance of $x$ to the origin. $\tau$ is smooth, as it is the composition of smooth functions and we obviously have that $\tau(y)=z$ because $\|y\|\leq \frac{c}{2}<2c$. This implies property 1 of the map $\tau$.
\begin{figure}[h]
\begin{center}
\includegraphics[width = 300pt]{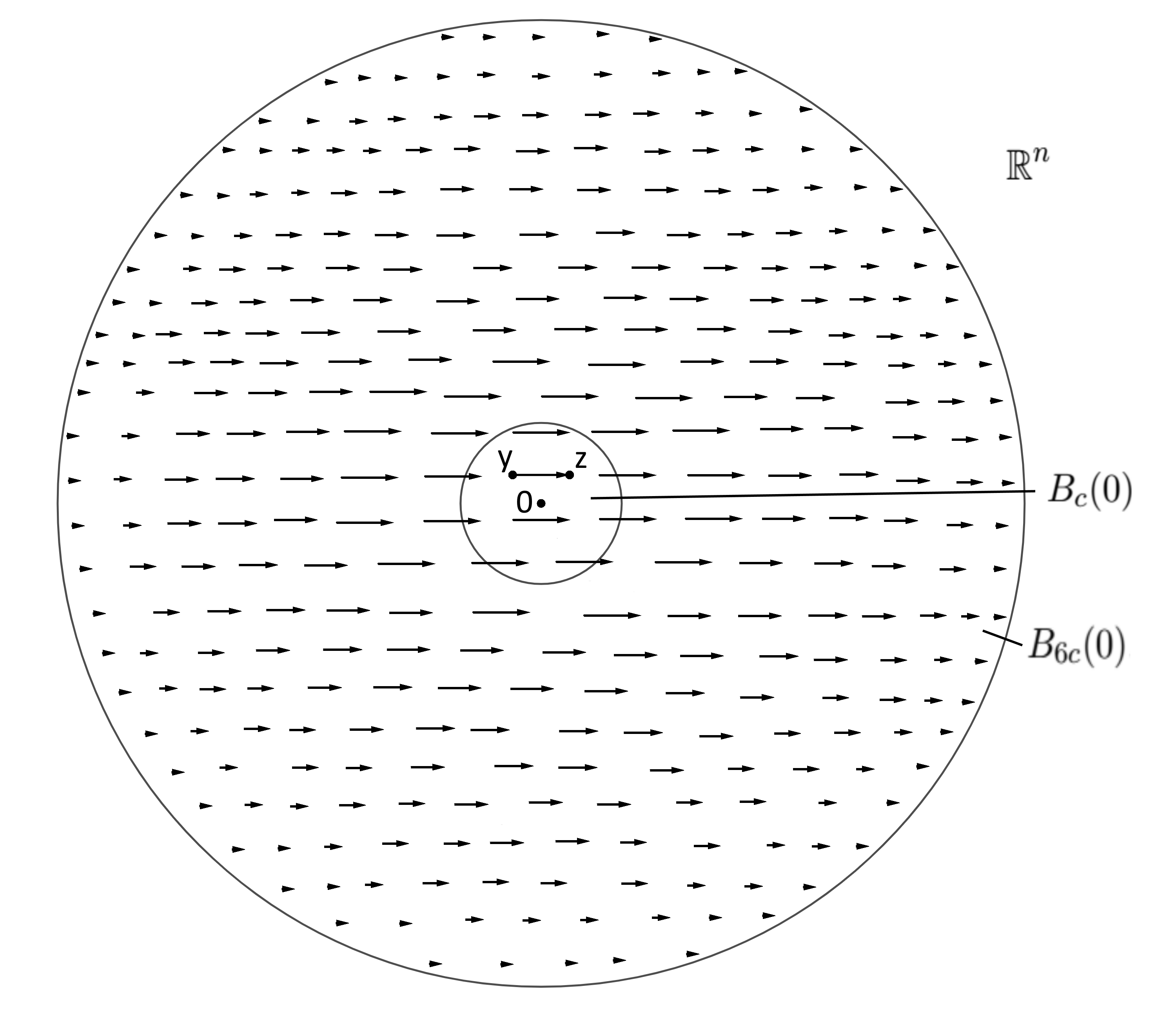}
\caption{\label{tau} The translation map $\tau:\R^n\to\R^n$, which differs from $id_{\R^n}$ only on $\overline{B_{6c}(0)}$}
\end{center}
\squeezeup
\end{figure}
\\Next, we notice that the linear homotopy \[H:\R^n\times [0,1] \rightarrow \R^n \text{, } H(x,t)= x+th(\frac{\|x\|-2c}{4c})\cdot(z-y)\] fixes $id_{\R^n}$ on $\R^n \backslash \overline{B_{6c}(0)}$, which implies property 2.\\
As already mentioned above, we can extend the homotopy $H$ to a homotopy from $id_{S^n}$ to the extension $\tau^{\ast}:S^n\rightarrow S^n$ of $\tau$, where $\tau^{\ast}$ is smooth because it coincides with the identity around $\infty$. We observe that \[\tau^{\ast}|_{B_{c}(0)}:B_{c}(0) \rightarrow B_{c}(z-y)\]
is a diffeomorphism with $(\tau^{\ast})^{-1}(B_{c}(z-y))=B_{c}(0)$ because the only points that can be mapped to $B_{c}(z-y)$ definitely lie in $B_{2c}(0)$ - where $\tau^\ast$ is just the translation by $(z-y)$ - due to the fact that $\|z-y\|\leq c$ and $h\leq 1$. Since translations in $\R^n$ preserve orientation, $d\tau_y$ and hence $d\tau^\ast_y$ are orientation-preserving isomorphisms, which implies property 3.\\
It also follows that \[d(\tau^{\ast}\circ f)_x = d\tau^{\ast}_y\circ df_x: T_xS^{n+k}\rightarrow T_zS^n\] is surjective for every $x\in (\tau^{\ast}\circ f)^{-1}(z)=f^{-1}(y)$ because $f$ is transverse to $y$. This implies property 4.\\[1ex]
Hereby we have shown that $\tau$ meets our requirements.
\begin{rem}
We only need $\tau^\ast$ to be diffeomorphic on $B_{c}(0)$ for our purposes, but one can even show that it is a diffeomorphism from the sphere to the sphere. This is where it is important that the denominator of the fraction put into $h$ is large enough. If one chooses it too small, e.g. $c$ instead of $4c$, one has that $\tau(2(z-y)) = 3(z-y)=\tau(3(z-y))$ for $\|z-y\|=c$ and loses injectivity because $h$ takes the values 1 and 0.
\end{rem}
\noindent Now that we have shown that $\Phi$ is well-defined, we can investigate it further:\\[1ex]
\textbf{Step 5:} The map $\Phi: \pi_{n+k}(S^n) \rightarrow \Omega ^{fr}_k(\R^{n+k})$ is a homomorphism.\\[1ex]
Consider two homotopy classes $c,d \in \pi_{n+k}(S^n)$ represented by pointed smooth maps $f,g:\map$ such that both are transverse to $y\in\R^n$. Now by definition of the composition in the homotopy groups, we obtain $f\ast g:\map$ in the following way: First, we map $S^{n+k}$ to the wedge sum $S^{n+k}\vee S^{n+k}$ by collapsing the equator. Then, we compose it with the map $f\vee g:S^{n+k}\vee \map$ which is defined to be $f$ on the first and $g$ on the second sphere.\\
We observe that $(f\ast g)^{-1}(y)$ is exactly the disjoint union of $f^{-1}(y)$ and $g^{-1}(y)$ because the equator $S^{n+k-1}\subset S^{n+k}$ is mapped to $\infty$. Here we identify the open upper and the open lower hemisphere of $S^{n+k}$ with $\R^{n+k}$, respectively. We obtain the desired result:
\[\Phi(c+d)=\Phi([f\ast g])=[f^{-1}(y)\sqcup g^{-1}(y)]=\Phi([f])+\Phi([g])=\Phi(c)+\Phi(d)\]
\textbf{Step 6:} The map $\Phi$ is surjective.\\[1ex]
Let $(M,\nu)$ be a $k$-dimensional framed \smf of $\R^{n+k}$. Then Theorem \ref{prod} implies that there is an open neighborhood $U\subset \R^{n+k}$ of $M$ together with a diffeomorphism $\psi^{-1}: U\rightarrow M\times \R^n$ such that each normal frame $\nu(x)$ corresponds to the standard basis of $\R^n$. Let $pr: M\times \R^n\rightarrow\R^n$ be the projection and define \[\theta:\map \text{, }\theta(x)\vcentcolon=
\begin{cases} pr\circ \psi^{-1}(x) & \text{for }x\in U\\
\infty & \text{otherwise}\end{cases}\] 
using our common identifications $S^{n+k}=\R^{n+k}\cup \{\infty\}$ and $S^{n}=\R^n\cup \{\infty\}$. Since the collapse map $S^{n+k}\rightarrow S^{n+k}\slash(S^{n+k}\backslash U)$ is continuous, $U^\ast\cong S^{n+k}\slash(S^{n+k}\backslash U)$, $(M\times\R^n)^\ast\cong M\times S^n$ and  $\psi^{-1}$ and $pr$ are proper, we see that $\theta$ is continuous, because the one-point compactification can be understood as a covariant functor from the category of locally compact Hausdorff spaces and proper maps to the category of compact Hausdorff spaces and pointed, particularly continuous, maps. \\
We notice that $\theta|_{U}$ is smooth and transverse to $0\in\R^n\subset S^n$. By Corollary \ref{whitneycor}, there is a smooth approximation $f:\map$ that is homotopic to $\theta$ and satisfies $f(x)\neq 0$ for $x\notin\theta^{-1}(0)$ 
because we can assume that $f|_{A\cup\{\infty\}}=\theta|_{A\cup\{\infty\}}$ for a closed neighborhood $A\subset U$ of $M$. Therefore, $f^{-1}(0)=\theta^{-1}(0)$, $f(\infty)=\infty$ and $f$ is transverse to $0$ . Thus, $f$ can be chosen in Step 1 to construct $\theta$ and, additionally, we have the following:
\[f^{-1}(0)=\theta^{-1}(0)=(\psi \circ pr^{-1})(0)=\psi(M\times\{0\})=M \]
Next, let $b_1,\dots,b_n$ be the basis of $T_0\R^n$ corresponding to the standard basis of $\R^n$ using the identification $T_0\R^n\cong \R^n$.  Now Theorem \ref{prod} implies that $d(pr\circ \psi^{-1})_x$ maps $\nu_1(x),\dots,\nu_n(x)$ to $b_1,\dots,b_n$ for every $x\in M$, since $d(pr)$ can be interpreted as the corresponding projection on the tangent spaces. Under the identification $T_0\R^n \cong T_0S^n$ it follows that \[f^\ast b=\theta^\ast b=(pr\circ \psi^{-1})^\ast b=\nu \text{, \,i.e. }\Phi([f])=[(M,\nu)].\]
\begin{figure}[h]
\begin{center}
\squeezeup
\includegraphics[width = 300pt]{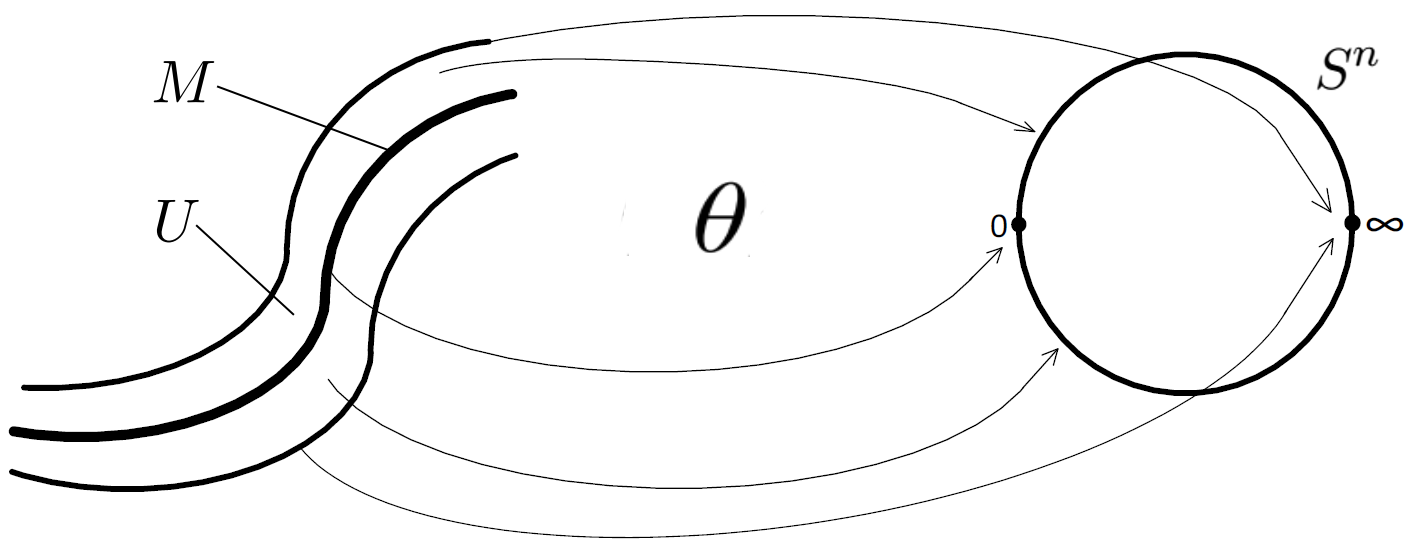}
\caption{ Sketch of the Pontryagin-Thom collapse map $\theta$}
\end{center}
\squeezeup
\end{figure}
\begin{rem}
The map $\theta$ constructed above is called a Pontryagin-Thom collapse map or a Pontrya\-gin-Thom construction. $\theta$ is a continuous map from $S^{n+k}$ to $S^n$ which evaluates the perpendicular distance to $M$ with regard to its framing $\nu$ and $\psi$ such that all points outside a sufficiently small neighborhood $U$ are sent to $\infty$.
\end{rem}
\noindent \textbf{Step 7:} The map $\Phi$ is injective.\\[1ex]
Let $c\in ker(\Phi)$, i.e. $\Phi(c)=0$. Let the smooth pointed map $f_0:\map$ represent $c$ such that $f_0$ is transverse to $0\in\R^n$. Let $b$ be the standard basis of $T_0S^n\cong T_0\R^n\cong \R^n$ and set $(M,\nu)\vcentcolon=(f_0^{-1}(0), f_0^\ast b)$. Now let $f_2:\map$ be the map obtained by applying the Pontryagin-Thom construction from Step 6 to  $(M,\nu)$, which means that $f_2^{-1}(0)=M$ and $f_2^\ast b=\nu$, too. Our aim is to show that $f_0$ and $f_2$ are homotopic maps.\\
At first, we choose a sufficiently small neighborhood $U$ of $M$ in $\R^{n+k}$ such that  we can apply Theorem \ref{prod} and $\infty \notin f_0(U),f_2(U)$. Using the diffeomorphism $\psi: M\times\R^n\cong U$ induced by $\nu$ we can define maps $g_0\vcentcolon=f_0\circ \psi,g_2\vcentcolon=f_2\circ \psi$ from $M\times\R^n$ to $\R^n$, i.e. $g_2$ is the projection as in Step 6. Now let $h:\R\rightarrow\R$ be a cutoff function similar to the one in Step 4 declining from $1$ to $0$ such that $h|_{(-\infty,\varepsilon)} \equiv 1$ for some $\varepsilon>0$. We do not define $h$ explicitly here because it is modified during the proof. Now regard the map
\[H:M\times\R^n\times[0,1]\rightarrow\R^n \text{, } (x,\eta,t)\mapsto g_0(x,\eta)+th(|\eta|)(g_2(x,\eta)-g_0(x,\eta))\]
whose restriction $H|_{V'}$ to a neighborhood $V'\subset U'\vcentcolon=\psi^{-1}(U)$ of $M$ where $h(|\eta|)\equiv 1$ is a homotopy from $g_0|_{V'}$ to $g_2|_{V'}$. Define $g_1\vcentcolon= H_1:M\times\R^n\rightarrow \R^n$ and assume that $h(|\eta|)=0$ for all $(x,\eta)\notin U'$. Then the map
\[f_1:\map\text{, } f_1\vcentcolon=\begin{cases} g_1\circ \psi^{-1} &\text{on }  U \\ f_0 &\text{on } S^{n+k}\backslash U \end{cases}\]
is continuous and homotopic to $f_0$, since $f_1|_{S^{n+k}\backslash U}=f_0|_{S^{n+k}\backslash U}$ and $g_1\simeq g_0 $ via $H$. Furthermore, it satisfies $f_1|_V=f_2|_V$, where $V\vcentcolon=\psi(V')\subset U$.\\
In order to show that $f_1$ is homotopic to $f_2$ we need that $0 \notin f_1(U\backslash V)$. Then we can interpret $f_1|_{S^{n+k}\backslash V}$, $f_2|_{S^{n+k}\backslash V}$ as maps to $\R^n$ by using the identification $S^n = \R^n\cup\{0\}$. Thus, a linear homotopy yields $f_1\simeq f_2$.\\
To show $0 \notin f_1(U\backslash V)$ observe that \[(dg_0)_{(x,0)}=(dg_2)_{(x,0)}=pr_{\R^n}\enspace \forall x\in M\] because $f_0^\ast b=f_2^\ast b$ and the differential of the projection $g_2$ can be interpreted as the corresponding projection on the tangent spaces. Our goal is to find a constant $a>0$ such that \[\langle g_0(x,\eta), \eta \rangle >0 \text{ and } \langle g_2(x,\eta), \eta\rangle >0\]
for all $x\in M$ and $\eta \in \R^n$ with $0<\|\eta\|<a$. This means that $g_0(x,\eta)$ and $g_2(x,\eta)$ belong to the same open half-space in $\R^n$, which implies that $g_1(x,\eta)\neq 0$ because it lies on the line between them. \\
Since $(dg_0)_{(x,0)}=pr_{\R^n} \enspace \forall x\in M$, Taylor’s Theorem implies
\begin{flushleft}
$g_0(x,\eta)=g_0(x,0)+(dg_0)_{(x,0)}((x,\eta)-(x,0))+a(x,\eta)\|(x,\eta)-(x,0)\|^2$\\[1ex]
$=0+\eta+a(x,\eta)\|\eta\|^2\text{ for a smooth }a:M\times\R^n\rightarrow \R^n$\\[1ex]
Now define $a_0\vcentcolon=max\{\|a(x,\eta)\|:\|\eta\|\leq 1\}$ \\[1ex]
$\implies\|g_0(x,\eta)-\eta\|\leq a_0\|\eta\|^2$ for $\|\eta\|\leq 1$\\[1ex]
$\implies|\langle g_0(x,\eta)-\eta,\eta\rangle|\leq \|g_0(x,\eta)-\eta\|\|\eta\|\leq a_0\|\eta\|^3$\\[1ex]
$\implies \langle g_0(x,\eta),\eta\rangle \geq \|\eta\|^2-a_0\|\eta\|^3>0 \text{ for }0<\|\eta\|<min\{a_0^{-1},1\}$
\end{flushleft}
Analogously, one obtains such an inequality and an $a_2>0$ for $g_2$. Now $a\vcentcolon=min\{a_0^{-1},a_2^{-1},1\}$ is our desired constant.\\
Now we can adjust the cutoff function $h$ such that $h(r)=0 \enspace \forall r\geq a$ making it decline to $0$ very quickly ($V$ and $V'$ may be getting smaller). It follows that $0 \notin f_1(U\backslash V)$ and hence, $f_0 \simeq f_1 \simeq f_2$.\\
Next, consider the framed cobordism $X\subset\R^{n+k}\times[0,1]$ from $M$ to the empty manifold $\emptyset$, which exists by assumption, together with a neighborhood $W$ of $X$ in $\R^{n+k}\times[0,1]\subset\R^{n+k+1}$ as in Theorem \ref{prod} that is diffeomorphic to $X\times\R^n$. We can assume that $W\cap(\R^{n+k}\times\{0\})=U$ (otherwise $U$ needs to be chosen smaller). Then we can apply the Pontryagin-Thom construction from Step 6 to $X$ to obtain a homotopy
$G: S^{n+k}\times [0,1] \rightarrow S^n $
with $G_0=f_2$ and $G_1\equiv\infty$, as $W$ can be chosen such that its intersection with $\R^{n+k}\times\{1\}$ is empty. Finally, it follows that the original representative $f_0$ of the homotopy class $c\in ker(\Phi)$ is homotopic to the constant map $G_1$, i.e. $c=0$.
\end{proof}

\noindent
The benefit of using transversality in this proof is that one can choose an arbitrary point $y\in \R^n\subset S^n$ in Step 1, since the representative $f$ of a homotopy class $c\in \pi_{n+k}(S^n)$ can be assumed to be transverse to $y$ by the approximation statements \ref{whitneycor} and \ref{transversalitycor}. This facilitates defining $\Phi: \pi_{n+k}(S^n) \rightarrow \Omega ^{fr}_k(\mathbb{R}^{n+k})$ explicitly and therefore also simplifies the argument in Step 5 that $\Phi$ is a homomorphism. When using an approach without transversality, one needs to choose another point $y'\in \R^n\subset S^n$ close to $y$ which is a regular value of $f$ (exists by Sard's Theorem). Then, one needs to show that the framed cobordism class of the corresponding Pontryagin manifold does not depend on the choice of $y'$, which requires a technical argument. \\[1ex]
Above, a framed cobordism theorem was proved using transverality. Never\-theless, as already mentioned in the introduction, the main purpose of  transversality was cobordism theory for Thom. He used it to generalize Pontryagin's work on framed cobordism in a remarkable way.  Like other ideas of Thom transversality has many more applications and is still signi\-ficant today.

\newpage
\appendix 
\section{Appendix}
\begin{thm} \label{sard} \textbf{(Sard's Theorem)}
Let $f:M\to N$ be a smooth map between smooth manifolds. Then almost every point in $N$ is a regular value of $f$, i.e. the set of critical values of $f$ has Lebesgue measure zero. \cite{Sa42}
\end{thm}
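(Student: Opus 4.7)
The plan is to prove Sard's Theorem by reducing the global statement on manifolds to the local Euclidean statement, then proving the local statement by induction on the dimension of the domain. Since every smooth manifold is second countable, I can cover $M$ by countably many coordinate charts whose images lie in single charts of $N$. A countable union of measure zero subsets of $N$ is measure zero (measure zero being a well-defined concept on manifolds, as it is preserved by diffeomorphisms in a fixed dimension). Hence it suffices to prove the following: if $U \subset \mathbb{R}^m$ is open and $f : U \to \mathbb{R}^n$ is smooth, then the set $f(C)$ of critical values has Lebesgue measure zero in $\mathbb{R}^n$, where $C \subset U$ is the critical set.

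I would prove the local statement by induction on $m$. For $m = 0$, $U$ is a countable set of points, so $f(C)$ is countable and has measure zero. For the inductive step, I define the nested closed subsets
\[
C_i \defeq \{ x \in C \,:\, \text{all partial derivatives of } f \text{ of order } \leq i \text{ vanish at } x\},
\]
so that $C = (C \setminus C_1) \cup \bigcup_{i \geq 1} (C_i \setminus C_{i+1}) \cup \bigcap_i C_i$, and truncate by choosing an integer $k$ with $k(n/m) > m$ (say $k > m/n$). I then handle the three types of pieces separately:

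First, for a point $x_0 \in C \setminus C_1$, some first partial derivative of some component of $f$ is nonzero at $x_0$, so by the implicit function theorem I can straighten coordinates in a neighborhood of $x_0$ to make $f$ look like $(x_1, \ldots, x_m) \mapsto (x_1, g_2(x), \ldots, g_n(x))$. The critical set of $f$ restricted to each slice $\{x_1 = t\}$ is then the critical set of a smooth map from an open subset of $\mathbb{R}^{m-1}$ to $\mathbb{R}^{n-1}$; the inductive hypothesis gives that its image has $(n-1)$-measure zero, and Fubini's theorem then yields that the whole image has $n$-measure zero on this neighborhood. Covering $C \setminus C_1$ by countably many such neighborhoods finishes this piece. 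The argument for $C_i \setminus C_{i+1}$ is analogous: some $(i+1)$-st derivative of $f$ is nonzero while all lower derivatives vanish, and one uses an auxiliary function (a suitable partial derivative) to flatten the relevant coordinate and invoke induction via Fubini as above.

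Finally, for $f(C_k)$ with $k$ chosen as above, I cover $C_k$ by small closed cubes of side $\delta$ inside $U$; Taylor's theorem gives $\|f(x) - f(y)\| \leq M \|x - y\|^{k+1}$ on each such cube $Q$ because all derivatives of order $\leq k$ vanish on $C_k \cap Q$, so $f(Q)$ is contained in a cube of side $\sim \delta^{k+1}$ in $\mathbb{R}^n$. Summing over the $\sim \delta^{-m}$ cubes needed to cover $C_k$ gives total $n$-volume bounded by $\text{const} \cdot \delta^{n(k+1)-m}$, which tends to $0$ as $\delta \to 0$ by the choice of $k$. Combining the three pieces completes the induction. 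The main obstacle is the careful coordinate-straightening and Fubini argument for $C \setminus C_1$ and $C_i \setminus C_{i+1}$, where one has to verify that the relevant ``slice critical sets'' really do contain the original critical points and that the inductive hypothesis applies in the correct reduced dimension.
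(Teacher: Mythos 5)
The paper does not prove Sard's Theorem at all: it is stated in the appendix as a known result with a citation to Sard's 1942 article and is used as a black box (for instance in the Parametric Transversality Theorem and in the construction of $\Phi$), so there is no internal proof to compare yours against. Judged on its own, your outline is the classical Milnor-style proof and is essentially correct: reduce by second countability to the local statement for a smooth $f:U\to\mathbb{R}^n$ with $U\subset\mathbb{R}^m$ open, induct on $m$, split the critical set into $C\setminus C_1$, the layers $C_i\setminus C_{i+1}$, and a terminal piece $C_k$, treat the first pieces by coordinate straightening plus the inductive hypothesis, and kill $f(C_k)$ by the Taylor/cube-counting estimate. Three points to tighten. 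First, your exponent condition is garbled: the cube count needs $n(k+1)>m$, i.e. $k+1>m/n$; the conditions you wrote ($kn/m>m$, respectively $k>m/n$) are both sufficient but are not the inequality your final estimate $\delta^{n(k+1)-m}\to 0$ actually uses, so state $n(k+1)-m>0$ directly. Second, the step for $C_i\setminus C_{i+1}$ with $i\geq 1$ is not literally \emph{Fubini as above}: there one picks an $i$-th partial derivative $w$ of a component of $f$ with $w=0$ on $C_i$ but $dw\neq 0$ at the point in question, uses $w$ as a new first coordinate so that $C_i$ lies locally in the hypersurface $\{w=0\}$, and applies the inductive hypothesis to the restriction of $f$ to that $(m-1)$-dimensional hypersurface, the target remaining $\mathbb{R}^n$; every point of $C_i$ is critical for this restriction because $i\geq 1$ forces $df=0$ there, and no Fubini argument is needed in this step. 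Third, the Taylor bound $\|f(y)-f(x)\|\leq M\|y-x\|^{k+1}$ on a compact cube holds when $x\in C_k\cap Q$ and $y\in Q$, not for arbitrary pairs in $Q$, which is exactly the form the covering argument uses; phrasing it that way (and working inside a compact cube exhausting $U$) removes any ambiguity. With these repairs the argument is complete for smooth maps, which is all the paper's statement requires.
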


\begin{thm} \label{whitneyembedding} \textbf{(Whitney's Embedding Theorem)}
Every smooth mani\-fold of positive dimension $K$ can be embedded smoothly into $\R^{2K}$. \cite{Wh44}

\end{thm}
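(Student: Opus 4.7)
The plan is to prove the theorem in two stages: first produce an embedding of $M$ into some Euclidean space $\R^N$ of possibly very large dimension, and then iteratively project in generic directions to bring $N$ down to $2K$.

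For the first stage I would treat the compact case directly. Cover $M$ by finitely many coordinate charts $(U_i,\varphi_i)$, $i=1,\dots,r$, and choose a smooth partition of unity $\{\rho_i\}$ subordinate to this cover. Then the map
\[ F:M\to\R^{r(K+1)},\ x\mapsto(\rho_1(x)\varphi_1(x),\dots,\rho_r(x)\varphi_r(x),\rho_1(x),\dots,\rho_r(x)), \]
where $\rho_i\varphi_i$ is extended by zero outside $U_i$, is readily checked to be a smooth injective immersion, hence an embedding by compactness of $M$. For noncompact $M$ one upgrades this construction using a proper exhaustion $M=\bigcup_j K_j$ by compact sets together with a proper function $f:M\to\R$, adjoining $f$ as an extra coordinate so that the resulting embedding is proper.

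For the second stage I would apply Sard's Theorem (Theorem \ref{sard}) to iteratively decrease the ambient dimension. Suppose $M\subset\R^N$ is smoothly embedded with $N>2K+1$. Consider the smooth maps
\[ \sigma:\{(x,y)\in M\times M:x\ne y\}\to S^{N-1},\ (x,y)\mapsto\tfrac{x-y}{\|x-y\|}, \]
and $\tau$ defined on the unit sphere bundle of $TM$, sending $(x,v)$ to $v\in T_xM\subset\R^N$. Both source manifolds have dimension at most $2K<N-1$, so by Sard there is a direction $v\in S^{N-1}$ lying outside the images of both $\sigma$ and $\tau$; orthogonal projection of $M$ onto the hyperplane $v^\perp\cong\R^{N-1}$ then remains an injective immersion, hence an embedding in the compact case. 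Iterating this descent brings the ambient dimension down to $2K+1$.

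The main obstacle is the final reduction from $\R^{2K+1}$ to $\R^{2K}$. At this borderline dimension the source of $\sigma$ has the same dimension as the target sphere, so its image can have positive measure and the naive projection argument breaks down; one is left with only an immersion possessing finitely many isolated transverse double points. Eliminating these self-intersections requires the celebrated \emph{Whitney trick}, which pairs oppositely signed double points and cancels them via an embedded disk together with an ambient isotopy supported near it. This piece of delicate geometric surgery is the heart of Whitney's original argument and is the reason the theorem is quoted here as a black box rather than proved in place.
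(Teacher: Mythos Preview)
The paper does not prove this statement at all: Theorem~\ref{whitneyembedding} sits in the appendix as a cited black box from \cite{Wh44}, invoked once in the proof of Corollary~\ref{transversalitycor} to place $N$ inside some Euclidean space. There is therefore nothing to compare your argument against.

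That said, your outline is the standard route and is broadly sound as a sketch. Two points you would need to tighten if this were to become an actual proof. First, in the noncompact case the first-stage map cannot literally be built from finitely many charts as written; one needs a countable locally finite cover together with a colouring argument (or the Whitney-style inductive construction) to land in a fixed $\R^N$, and properness must be maintained through each projection, not merely arranged once at the outset. Second, the Whitney trick that cancels double points in the passage from $\R^{2K+1}$ to $\R^{2K}$ genuinely requires $K\geq 3$; in dimensions $K=1$ and $K=2$ the embedded Whitney disk cannot in general be found, and Whitney's original paper treats those cases by separate, more elementary arguments. Your closing remark that this step is ``the reason the theorem is quoted here as a black box'' is exactly right and matches how the present paper handles it.
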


\section{References}
\begingroup
\renewcommand{\section}[2]{}%

\endgroup
\end{document}